\gdef\SetFigFont#1#2#3#4#5{%
  \reset@font\fontsize{#1}{#2pt}%
  \fontfamily{#3}\fontseries{#4}\fontshape{#5}%
  \selectfont}%
\newtheorem*{q}{Question}
\newtheorem{theorem}{Theorem}
\newtheorem{lemma}[theorem]{Lemma}
\newtheorem{prop}[theorem]{Proposition}
\newtheorem{cor}[theorem]{Corollary}
\newtheorem{step}{Step}
\theoremstyle{remark}
\newtheorem*{remark}{Remark}
\theoremstyle{remark}
\newtheorem{definition}[theorem]{Definition}
\begin{document}
\author{ Andrey Gogolev}
\title{Diffeomorphisms H\"older conjugate to Anosov diffeomorphisms}
\begin{abstract}
We show by means of a counterexample that a $C^{1+Lip}$
diffeomorphism H\"older conjugate to an Anosov diffeomorphism is not
necessarily Anosov. Also we include a result from the 2006 Ph.D.
thesis of T.~Fisher: a $C^{1+Lip}$ diffeomorphism H\"older conjugate
to an Anosov diffeomorphism is Anosov itself provided that H\"older
exponents of the conjugacy and its inverse are sufficiently large.
\end{abstract}
\date{August 26, 2008, revised on February 09}
 \maketitle

\section{Introduction}
Consider  Anosov diffeomorphisms $f$ and $g$ of a compact smooth
manifold $M$ that are conjugate by a homeomorphism $h$:
$$
h\circ f=g\circ h.
$$
 It is
well known and easy to show that $h$ is in fact H\"older continuous.
When we say that the conjugacy is H\"older or that two
diffeomorphisms are H\"older conjugate we mean that the conjugacy
and its inverse are H\"older continuous.

It is natural to ask the following converse question.
\begin{q}
Is every diffeomorphism that is H\"older conjugate to an Anosov diffeomorphism itself Anosov?
\end{q}

This question was asked by A.~Katok. His motivation came from
differentiable rigidity of higher rank Anosov actions. For example,
a popular object of study is a $\mathbb Z^k$-action which contains
Anosov elements and which is conjugate to an algebraic action for
which Anosov elements are dense. If the answer to the question above
were positive then we would immediately get that Anosov elements are
dense in the original action. Moreover, the Weyl chamber picture in
$\mathbb R^k$ for non-algebraic action would be the same as the one
for the algebraic action. Normally this information is unavailable
or only available through difficult means otherwise. See upcoming
book~\cite{KN} for an introduction to rigidity of Anosov actions.

Unfortunately the answer is negative. We will provide a concrete
counterexample of a $C^{1+Lip}$ diffeomorphism of the 2-torus
$\mathbb T^2$ H\"older conjugate to Anosov but not Anosov itself. In
fact, the counterexample can be constructed to be $C^r$ for any
$r\in(1,3)$ (see remark after Theorem 1 below).

The basic method to produce a non-Anosov diffeomorphism that is
topologically conjugate to Anosov one is to start with an Anosov
diffeomorphism and isotope it pushing stable eigenvalues at a fixed
point to the unit circle. This can be done so that stable and
unstable foliations persist. They remain mutually transversal
everywhere but not uniformly contracting and expanding. The new
system is topologically conjugate to the original Anosov map.
See~\cite{K} for the detailed construction and the proof. Another
similar example was considered in~\cite{L}. It has an additional
feature: stable and unstable manifolds at the fixed point are
tangent.

Looking at the behavior of orbits approaching a fixed point along
the stable manifold we have an exponentially fast approach for the
Anosov map conjugated to a much slower sub-exponential approach. A
H\"older continuous conjugacy would necessarily preserve the
exponential speed, only changing the exponent. This shows that these
diffeomorphisms are far from being H\"older conjugate. Note that in
the meantime the conjugacy or its inverse may turn out to be even
Lipschitz.

Another way to produce such a diffeomorphism is to start with an
Anosov diffeomorphism and ``bend" unstable manifold of a
heteroclinic point $R$ until stable and unstable manifolds at $R$
become tangent. This isotopy can be done locally in the neighborhood
of $R$. The result is a diffeomorphism with stable and unstable
foliation being transverse everywhere but along the orbit of $R$.
Along this orbit stable and unstable manifolds exhibit a tangency.
If we isotope inside of $\mathrm{Diff}^\infty(M)$ then the tangency
is at least cubic: it cannot be quadratic since the stable and
unstable foliations are topologically transverse. This bifurcation
at the boundary of Anosov systems was independently studied by
H.~Enrich~\cite{E} and Ch.~Bonatti, L.~Diaz, F.~Vuillemin
\cite{BDV}. It was shown in~\cite{E} that the new system is
conjugate to the original Anosov map. All periodic points remain
hyperbolic. Hence, unlike in the previous situation, there is a hope
that the topological conjugacy is in fact H\"older continuous.


\begin{figure}[htbp]
\begin{center}

\begin{picture}(0,0)%
\includegraphics{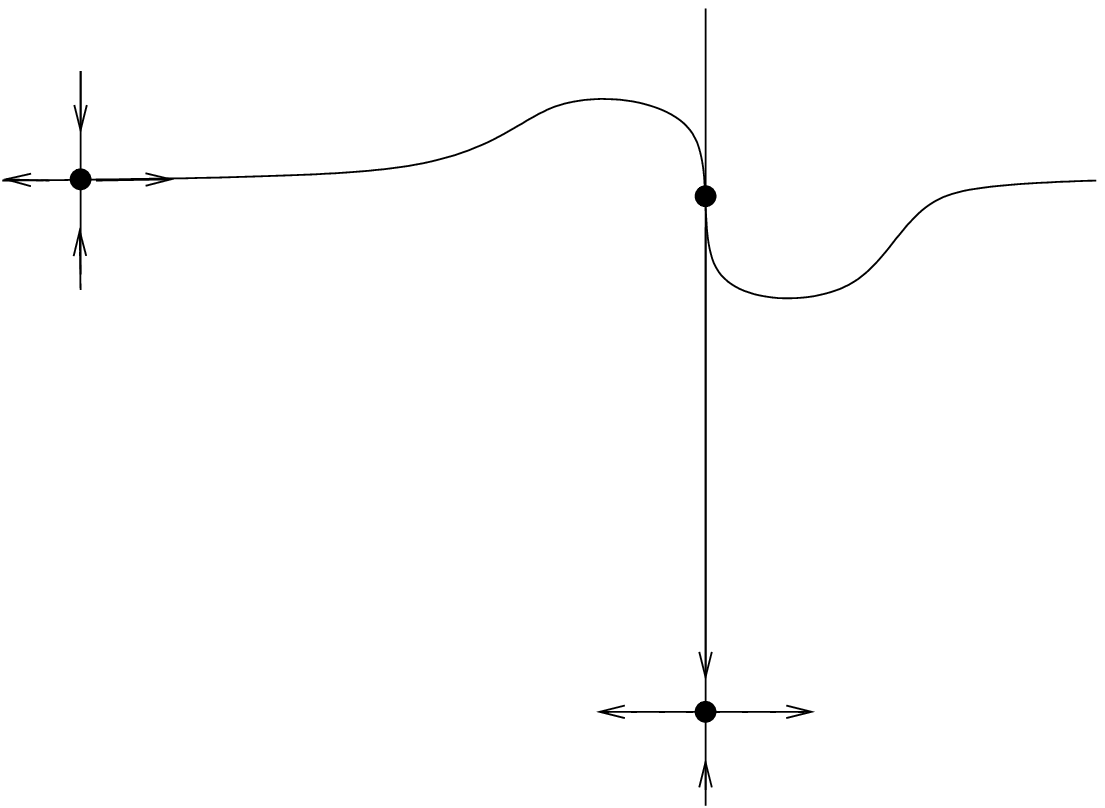}%
\end{picture}%
\setlength{\unitlength}{3947sp}%
\begingroup\makeatletter\ifx\SetFigFont\undefined%
\gdef\SetFigFont#1#2#3#4#5{%
  \reset@font\fontsize{#1}{#2pt}%
  \fontfamily{#3}\fontseries{#4}\fontshape{#5}%
  \selectfont}%
\fi\endgroup%
\begin{picture}(5274,3849)(289,-3223)
\put(3301,-361){\makebox(0,0)[lb]{\smash{{\SetFigFont{12}{14.4}{\rmdefault}{\mddefault}{\updefault}{\color[rgb]{0,0,0}$R$}%
}}}}
\end{picture}%

\end{center}
\caption{Heteroclinic tangency}\label{1_tangency}
\end{figure}


We look at the simplest bifurcation of the type described above.
Consider the arc $f_t$, $t\in[0,1]$, that starts with a linear
hyperbolic automorphism $L=f_0$ and ends with diffeomorphism $f=f_1$
with tangency at a heteroclinic point $R$. The difference is that
instead of creating a cubic tangency we create a transverse
quadratic tangency. The price we pay is that $f_1$ is only
$C^{1+Lip}$ smooth. Higher order derivatives do not exist at point
$L^{-1}(R)$.

\begin{theorem}
\label{main} Suppose that $L$ and $f$ are as in the previous
paragraph. The conjugacy between $L$ and $f$ and its inverse are
H\"older continuous with exponents equal to $1/2-\delta$ and
$1/4-\delta$. Number $\delta$ can be made arbitrarily small by an
appropriate choice of $L$ and $f$.
\end{theorem}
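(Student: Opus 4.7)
The plan is to estimate the Hölder exponents of $h$ and $h^{-1}$ by combining a local analysis near the tangency point $R$ with a global propagation argument using the conjugacy equation $h\circ L=f\circ h$.

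As a first step, I would show that the restriction of $h$ to any individual stable or unstable leaf of $L$ is bi-Lipschitz. On each leaf the one-dimensional dynamics of both $L$ and $f$ are uniformly contracting (or expanding) with identical multipliers at the attracting (or repelling) periodic endpoint (these multipliers being conjugacy invariants at periodic points), and Sternberg's linearization theorem combined with the $C^{1+Lip}$ regularity of $f$ yields a bi-Lipschitz one-dimensional conjugacy along each leaf. Next, in local coordinates near $R$ with $W^s_f(R)=\{y=0\}$ and $W^u_f(R)=\{y=x^2\}$, combining the bi-Lipschitz regularity along leaves with the explicit geometry of the two tangent foliations yields a local description of $h$ involving a square-root in the direction transverse to the tangency. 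A careful computation tracking how pairs of points that straddle the tangency region are distorted then yields the local Hölder exponent $1/2$ for $h$. For $h^{-1}$, the analysis must account not just for the single tangency at $R$ but also for the family of tangencies at iterates $f^kR$, which accumulate near the attracting periodic orbit with coefficients scaling as $\lambda^{3k}$; this accumulation yields the local Hölder exponent $1/4$.

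To propagate these local estimates globally, I would use $h\circ L^n=f^n\circ h$ to transport any pair of nearby points into the neighborhood of the tangency orbit. Since $f=L$ outside a neighborhood of $L^{-1}(R)$, away from this orbit no distortion is introduced, and each passage near an iterate $L^kR$ contributes a factor governed by the local Hölder estimate. A counting argument on the number of passes along an orbit segment of length $n\sim\log(1/d)$, together with careful bookkeeping of the varying tangency coefficients, gives the global Hölder exponents $1/2-\delta$ for $h$ and $1/4-\delta$ for $h^{-1}$, where $\delta$ absorbs the sub-exponential corrections and can be made arbitrarily small by choosing $L$ with expansion rate $\lambda$ close to $1$. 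The main technical difficulty lies in obtaining the asymmetric local exponents from the interplay of the tangent foliations and the accumulation of tangencies near the fixed point, and in controlling the sub-exponential loss in the global counting argument.
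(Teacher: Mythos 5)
There are genuine gaps here, starting with your first step. The claim that $h$ restricted to a leaf is bi-Lipschitz is false for this example, and the argument offered for it does not work: eigenvalues at periodic points are invariants of \emph{smooth} (or Lipschitz) conjugacy, not of topological conjugacy, so you cannot assume the multipliers of $L$ and $f$ agree along corresponding leaves; moreover most leaves contain no periodic point at which to linearize. In fact the whole point of the construction is that the expansion of $f$ along $E^u$ is \emph{not} uniformly comparable to $\lambda$: inside the sets accumulating on the orbit of $R$ the unstable direction is nearly vertical and is contracted by roughly $\lambda^{-n}$ over $n$ iterates before it recovers. The paper's Step 1 shows that the correct leafwise regularity is Hölder with exponent $1-\delta$ for $h$ and only $1/4-\delta$ for $h^{-1}$, obtained by decomposing orbit segments into ``cycles'' and proving the two-sided bound $\lambda^{-n-m-1}\mu^{3n+3m+3}\le\prod D^u\le(\lambda^{1+\delta})^{4n+4m+4}$ over each cycle. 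Your mechanism for the exponent $1/4$ (accumulation of tangencies with coefficients $\lambda^{3k}$) gestures at the right geometry (the rectangles $\bar B_k$ have vertical size of order $\lambda^{-3k}$), but in the paper the $1/4$ is a purely dynamical, leafwise phenomenon for $h^{-1}$; the quadratic tangency enters only in the second step, and only for $h$, where combining the two foliations through the ``beak'' at $f^n(R)$ costs the factor $1/2$ (so $1-\delta$ becomes $1/2-\delta$). For $h^{-1}$ there is no such loss because $W^s_L$ and $W^u_L$ are uniformly transverse.

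Your global propagation scheme also fails as stated: if each passage near the orbit of $R$ contributed a fixed multiplicative loss to the Hölder exponent, then over a time window $n\sim\log(1/d)$ the number of passages grows linearly and the exponent would degenerate to $0$. The correct argument does not count passes; it shows that the loss incurred inside each $\bar B_{n_j}$ is compensated by expansion during the remainder of that same cycle, so the \emph{rate} per unit time stays bounded below by $\mu^{1/2}$ (or $\mu^{1/4}$ when the first cycle is truncated near $R$). Finally, you cannot make $\delta$ small by taking $\lambda\to1$: hyperbolic automorphisms of $\mathbb T^2$ have $\lambda\ge(3+\sqrt5)/2$. The paper instead fixes $\lambda$ and the perturbation, and passes to finite covers $\mathbb R^2/k\mathbb Z^2$ so that $|PR|$ and $|QR|$ are large; this makes the return time to the perturbed region long, which is what drives $\mu\nearrow\lambda$ and $\delta\searrow0$.
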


\begin{remark}
Number $1/4$ is not a sharp bound for the exponent. It clearly can
be improved. For a diffeomorphism with a heteroclinic tangency of
order $1+\alpha$, $0<\alpha<2$ our arguments imply that the
conjugacy and its inverse are H\"older continuous with exponents
$\frac1{1+\alpha}-\delta$ and $\frac{2-\alpha}{2(1+\alpha)}-\delta$.
Clearly such a diffeomorphism is only $C^{1+\alpha-\varepsilon}$. We
stick to the case $\alpha=1$ mainly to avoid cumbersome notation.
Notice that if $\alpha$ is close to zero then both exponents are
close to $1$. In the smoothness class $C^3$ and higher our arguments
fail.
\end{remark}

In the next section we point out that this construction also
provides an example of a system for which Mather spectrum differs
from the periodic one.

We heavily rely on the results in~\cite{E} as well as~\cite{BDV}
and~\cite{C}. Thus in the Section 3 we formulate results that are
relevant to our goal. In Section 4 we prove that the conjugacy is
H\"older continuous.

Finally in the last section we present a very short proof of a
positive result from the 2006 Ph.D. thesis of Travis Fisher that
complements ours.

 \begin{theorem}[\cite{F}]
 \label{fisher}
 A $C^{1+Lip}$ diffeomorphism that is conjugate to an Anosov one via a
H\"older conjugacy $h$ is Anosov itself provided that the product of
H\"older exponents for $h$ and $h^{-1}$ is greater than $1/2$.
 \end{theorem}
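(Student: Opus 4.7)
The plan is to exhibit a $Df$-invariant uniformly hyperbolic splitting of $TM$ by transferring the hyperbolic structure of $g$ across $h$. First, pull back the stable and unstable foliations of $g$ via $h$ to obtain $f$-invariant continuous foliations $\mathcal{F}^s=h^{-1}(W^s_g)$ and $\mathcal{F}^u=h^{-1}(W^u_g)$, transverse at every point with H\"older leaves. The bi-H\"older property of $h$ immediately converts the uniform exponential contraction of $g$ along $W^s_g$ into a distance estimate on the $f$-side: for $y\in\mathcal{F}^s(x)$ with $r=d(x,y)$,
\[
d(f^n x, f^n y)=d\bigl(h^{-1}g^n h(x),\,h^{-1}g^n h(y)\bigr)\le C\,\mu_s^{n\beta}\,r^{\alpha\beta},
\]
where $\mu_s<1$ is the uniform stable rate of $g$, $\alpha$ is the H\"older exponent of $h$ and $\beta$ that of $h^{-1}$. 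An analogous estimate along unstable leaves in backward time is obtained by applying the same argument to $f^{-1}$.

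The main step is to upgrade this topological contraction to a genuine exponential decay of $\|Df^n|_{T\mathcal{F}^s}\|$. Because $f$ is $C^{1+\mathrm{Lip}}$, working along a smooth parametrization of a stable leaf and using Taylor's theorem at $x$,
\[
\bigl\|Df^n(x)|_{T_x\mathcal{F}^s}\bigr\|\,r\le d(f^n x, f^n y)+\tfrac{1}{2}\,\mathrm{Lip}(Df^n)\,r^2,
\]
so combining with the H\"older bound and dividing by $r$ yields
\[
\bigl\|Df^n(x)|_{T_x\mathcal{F}^s}\bigr\|\le C\,\mu_s^{n\beta}\,r^{\alpha\beta-1}+\tfrac{1}{2}\,\mathrm{Lip}(Df^n)\,r.
\]
Since $\alpha\beta<1$, the first term blows up as $r\to 0$ while the second grows with $r$. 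Optimizing in $r$, and using the crude chain-rule bound $\mathrm{Lip}(Df^n)\lesssim\|Df\|^{2n}$ together with the a priori control on $\|Df\|$ coming from the symmetric H\"older estimate along $\mathcal{F}^u$, the resulting exponential rate of $\|Df^n|_{T\mathcal{F}^s}\|$ is negative precisely when $\alpha\beta>1/2$. This is the point at which the product hypothesis is consumed. A parallel argument applied to $f^{-1}$ along $\mathcal{F}^u$ yields uniform expansion of $Df^n$ on $T\mathcal{F}^u$.

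With uniform contraction on $T\mathcal{F}^s$ and uniform expansion on $T\mathcal{F}^u$, the two continuous, transverse, $Df$-invariant distributions automatically span $TM$ and furnish the hyperbolic splitting; hence $f$ is Anosov. The main difficulty is the optimization above: the H\"older estimate degrades as $r\to 0$ whenever $\alpha\beta<1$, while the Taylor remainder spoils large scales, and both errors can be simultaneously driven to exponential smallness in $n$ only in the regime $\alpha\beta>1/2$. This is consistent with Theorem~\ref{main}, where $\alpha\beta\approx 1/8$ falls well short of the threshold.
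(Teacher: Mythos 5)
Your approach has two genuine gaps, one structural and one quantitative. Structurally, the objects $T_x\mathcal{F}^s$ and $\|Df^n(x)|_{T_x\mathcal{F}^s}\|$ are not defined: the pulled-back foliations $\mathcal{F}^{s/u}=h^{-1}(W^{s/u}_g)$ are only topological foliations (images of smooth leaves under a H\"older homeomorphism), and nothing in the hypotheses forces their leaves to be $C^1$ or to be tangent to any $Df$-invariant distribution. For a general $C^{1+Lip}$ diffeomorphism topologically conjugate to an Anosov one, the existence of invariant continuous subbundles is precisely the kind of conclusion one is trying to reach, not something that can be assumed at the outset; this is why one resorts to indirect characterizations of hyperbolicity. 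Quantitatively, even granting the Taylor inequality, the optimization in $r$ does not produce the threshold $\alpha\beta>1/2$. Balancing $C\mu_s^{n\beta}r^{\alpha\beta-1}$ against $\mathrm{Lip}(Df^n)\,r\lesssim\Lambda^{2n}r$ gives
\[
\bigl\|Df^n|_{T\mathcal{F}^s}\bigr\|\lesssim\Bigl(\mu_s^{\beta}\,\Lambda^{2(1-\alpha\beta)}\Bigr)^{n/(2-\alpha\beta)},
\]
which decays only if $\beta\,|\log\mu_s|>2(1-\alpha\beta)\log\Lambda$ --- a condition involving the actual rates $\mu_s$ and $\Lambda=\|Df\|_\infty$, not the bare inequality $\alpha\beta>1/2$. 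So the step where you say the hypothesis ``is consumed'' does not actually close.

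The paper's proof is entirely different and avoids both problems. It invokes Ma\~n\'e's characterization: a diffeomorphism is Anosov iff it is quasi-Anosov (every nonzero tangent vector has unbounded derivative orbit) and the stable dimensions at periodic points are constant (the latter is free from the topological conjugacy). If $\tilde f$ were not quasi-Anosov, a unit vector $v$ with $\|D\tilde f^nv\|\le 1$ for all $n$ yields, via the exponential map and the $C^{1+Lip}$ hypothesis, a $c\varepsilon^2$-pseudo-orbit that is $\delta$-shadowed by the true orbit with $\delta\asymp\varepsilon$. Pushing this through $h$ gives a $\xi$-pseudo-orbit for the Anosov map with $\xi\asymp\varepsilon^{2\alpha}$ whose shadowing orbit stays at distance $\delta\gtrsim\varepsilon^{1/\beta}=\xi^{1/(2\alpha\beta)}$, contradicting the linear shadowing bound $\delta\le C\xi$ for Anosov systems as soon as $2\alpha\beta>1$. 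Note how the exponent $2$ (hence the constant $1/2$) enters: it is the quadratic accuracy of the pseudo-orbit coming from the Lipschitz derivative, combined once with $\alpha$ and once with $\beta$. If you want to salvage your strategy, you would need both a reason for the invariant distributions to exist and a mechanism that produces the clean threshold $\alpha\beta>1/2$ independently of $\|Df\|$; the shadowing argument supplies both at once.
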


\begin{remark}
This result holds for any hyperbolic set as well. The proof is the
same. Also we remark that we have removed an unnecessary condition
that was present in the formulation of the result in~\cite{F}.
\end{remark}

\medskip

{\bfseries Acknowledgements.} Anatole Katok suggested the author to
look at the system with a heteroclinic tangency since this is the
simplest situation when one can hope to get a counterexample. The
author would like to thank A.~Katok for discussions, encouragement
and useful comments on the text itself. He also would like to thank
M.~Guysinsky for explaining the idea of the proof of
Theorem~\ref{fisher}.

\section{Periodic spectrum versus Mather spectrum}

Recall the definition of Mather spectrum.
Denote by $\Gamma(TM)$ the set of continuous vector fields with supremum norm. Given a diffeomorphism $f\colon M\to M$ define $f_*\colon\Gamma(TM)\to\Gamma(TM)$
$$
f_*v(\cdot)=Df\left(v(f^{-1}(\cdot))\right).
$$
The specrum $Q_f$ of the complexification of $f_*$ is called Mather spectrum of $f$.

\begin{theorem}[\cite{Math}]
\label{mather}
If non-periodic points of $f$ are dense then any connected component of $Q_f$ is an annulus centered at $0$. Diffeomorphism $f$ is Anosov if and only if $1\notin Q_f$.
\end{theorem}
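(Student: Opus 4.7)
The plan is to treat the two assertions separately: the annular structure of $Q_f$ and the spectral characterization of Anosov maps.

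For the annulus structure I would argue that the approximate point spectrum of the complexification of $f_*$ is invariant under multiplication by $S^1$, so that every component of $Q_f$ is rotationally symmetric, hence an annulus around $0$. Given approximate eigenvectors $v_n$ with $\|v_n\|=1$ and $\|f_* v_n-\lambda v_n\|\to 0$, I would twist by a continuous phase $\varphi_n\colon M\to S^1$ and set $\tilde v_n=\varphi_n v_n$. A direct computation gives $f_*(\varphi_n v_n)(x)=\varphi_n(f^{-1}(x))\,f_*v_n(x)$, so it suffices to arrange $\varphi_n(f^{-1}(x))/\varphi_n(x)\approx e^{-i\theta}$ on the essential support of $v_n$. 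Using density of non-periodic points one concentrates $v_n$ near a long orbit segment of a non-periodic point that does not revisit itself, assigns the phase $e^{-ik\theta}$ at the $k$-th iterate, and extends $\varphi_n$ continuously to $M$ by Urysohn or a partition of unity. Non-periodicity is essential here because a periodic orbit of period $p$ would impose the obstruction $e^{ip\theta}=1$.

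The easy half of the spectral characterization is immediate: an Anosov splitting $TM=E^s\oplus E^u$ makes $\Gamma(E^s)$ and $\Gamma(E^u)$ $f_*$-invariant, with spectral radii of $f_*|_{\Gamma(E^s)}$ and $f_*^{-1}|_{\Gamma(E^u)}$ strictly less than $1$, so $1\notin Q_f$. For the converse assume $1\notin Q_f$. The annulus structure separates $Q_f$ into $Q^-$ inside and $Q^+$ outside the unit disk, and the Riesz spectral projection $P=\tfrac{1}{2\pi i}\oint_\gamma(\zeta-f_*)^{-1}\,d\zeta$ around a contour in the gap yields a bounded $f_*$-invariant decomposition $\Gamma(TM)=V^-\oplus V^+$, where $V^\pm$ is characterized by exponential decay of $f_*^{\pm n}$.

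The remaining task is to upgrade this Banach-space decomposition to a continuous $Df$-invariant splitting of $TM$ itself. The crucial observation is the identity $f_*^n(hv)(x)=h(f^{-n}(x))\,f_*^nv(x)$, which shows that multiplication by any $h\in C(M)$ preserves the exponential-decay characterization of $V^\pm$; hence $V^\pm$ are $C(M)$-submodules of $\Gamma(TM)$. Setting $E^\pm_x:=\{v(x):v\in V^\pm\}\subset T_xM$, I would then verify with the help of partitions of unity that $V^\pm=\Gamma(E^\pm)$, that $E^\pm$ have constant dimension and depend continuously on $x$ (a Serre--Swan style argument on the compact manifold $M$), and that $f_*$-invariance of $V^\pm$ translates into $Df(E^\pm_y)=E^\pm_{f(y)}$. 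Uniform exponential bounds on $Df^n|_{E^\pm}$ then follow from the spectral radius formula together with compactness of $M$. The principal obstacle is this Serre--Swan step: the $C(M)$-module property is the input that forces the abstract spectral decomposition to come from genuine subbundles of constant dimension.
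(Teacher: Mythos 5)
The paper does not prove this theorem at all: it is quoted verbatim from Mather's 1968 paper \cite{Math} and used as a black box, so there is no in-paper argument to compare against. Your outline is, in substance, Mather's own proof: rotation-invariance of the spectrum via phase-twisted approximate eigenvectors localized along a long non-periodic orbit segment, then a Riesz projection across the unit circle and the observation that the spectral subspaces are $C(M)$-submodules, hence of the form $\Gamma(E^\pm)$ for continuous $Df$-invariant subbundles. Two steps you pass over quickly deserve a sentence each in a full write-up: (i) the twisting argument only shows the \emph{approximate point} spectrum is rotation-invariant, and one upgrades to all of $Q_f$ via $\partial Q_f\subset\sigma_{ap}(f_*)$ together with a connectedness argument on circles of fixed radius; (ii) the ``concentration'' of an approximate eigenvector near a non-recurring orbit segment is itself a small construction (cut off with a bump function and control the error at the two ends of the segment), and it is exactly here that density of non-periodic points enters. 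With those caveats the proposal is correct.
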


Define periodic spectrum of a diffeomorphism. Given a periodic point
$x$ of period $p$ denote by $\{\lambda_1(x)^p,\ldots
\lambda_d(x)^p\}$ the set of absolute values of eigenvalues of
$Df^p(x)$. Then
$$
P_f\stackrel{\mathrm{def}}{=}\overline{\bigcup_{x \in
\textup{Per}(f)}\{\lambda_1(x),\ldots ,\lambda_d(x)\}}.
$$

The following is easy to prove.
\begin{prop}
Let $f$ be an Anosov diffeomorphism of $\mathbb T^2$. Then $P_f=Q_f\cap\mathbb R_+$.
\end{prop}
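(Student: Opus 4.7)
The strategy is to decompose $Q_f$ via the Anosov splitting, identify each resulting annulus with the range of a Lyapunov exponent functional, and conclude by Sigmund's density of periodic measures.

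First, since $f$ is Anosov, the continuous $Df$-invariant splitting $TM=E^s\oplus E^u$ induces an $f_*$-invariant topological splitting $\Gamma(TM)=\Gamma(E^s)\oplus\Gamma(E^u)$, so $Q_f=Q^s\cup Q^u$ where $Q^\sigma$ denotes the spectrum of $f_*$ on $\Gamma(E^\sigma)$. On $\mathbb T^2$ each bundle $E^\sigma$ is one-dimensional and admits no proper $Df$-invariant sub-bundle; by Theorem~\ref{mather} applied to each summand (non-periodic points are dense for an Anosov system) each $Q^\sigma$ is a single closed annulus $\{z:r^\sigma_-\le|z|\le r^\sigma_+\}$. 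The spectral radii of a weighted composition operator on a line bundle are given by
$$
\log r^\sigma_{+}=\lim_{n\to\infty}\frac{1}{n}\log\sup_{x\in M}\|Df^n|_{E^\sigma}(x)\|,\qquad \log r^\sigma_{-}=\lim_{n\to\infty}\frac{1}{n}\log\inf_{x\in M}\|Df^n|_{E^\sigma}(x)\|.
$$

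Second, since $\varphi^\sigma:=\log\|Df|_{E^\sigma}\|$ is continuous on $M$, a standard Krylov--Bogolyubov/subadditive argument identifies these uniform growth rates with $\max_\nu\chi^\sigma(\nu)$ and $\min_\nu\chi^\sigma(\nu)$, where $\nu$ ranges over $f$-invariant Borel probability measures and $\chi^\sigma(\nu)=\int\varphi^\sigma\,d\nu$. For a periodic point $x$ of period $p$ with $\sigma$-eigenvalue $\mu$, the orbit measure $\nu_x$ yields $\chi^\sigma(\nu_x)=\tfrac1p\log|\mu|=\log\lambda^\sigma(x)$, so $\lambda^\sigma(x)\in[r^\sigma_-,r^\sigma_+]\subset Q_f$. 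This proves $P_f\subseteq Q_f\cap\mathbb R_+$.

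For the reverse inclusion, the simplex of $f$-invariant measures is convex and $\chi^\sigma$ is a weak-$*$-continuous affine functional on it, so its image is precisely the interval $[\log r^\sigma_-,\log r^\sigma_+]$. Given any $\lambda$ in this interval, pick an invariant $\nu$ with $\chi^\sigma(\nu)=\log\lambda$. Sigmund's theorem (Anosov diffeomorphisms satisfy the specification property) produces a sequence of periodic measures $\nu_{x_n}$ converging weak-$*$ to $\nu$; continuity of $\varphi^\sigma$ then gives $\log\lambda^\sigma(x_n)\to\log\lambda$. Uniting over $\sigma\in\{s,u\}$ and closing up yields $Q_f\cap\mathbb R_+\subseteq P_f$.

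The main obstacle is the variational identification of the radii $r^\sigma_\pm$ with the extremes of $\chi^\sigma$ over invariant measures, together with the appeal to Sigmund's density theorem; once these two ingredients are in hand, both inclusions follow by soft continuity arguments.
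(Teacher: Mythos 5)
Your argument is correct and complete; the paper itself offers no proof of this proposition (it is dismissed as ``easy to prove''), so there is nothing to compare against line by line, but your route is a natural and rigorous way to fill the gap. The three ingredients you isolate are all sound: (i) the splitting $Q_f=Q^s\cup Q^u$ with each summand a single annulus whose radii are the uniform exponential growth rates of $\|Df^n|_{E^\sigma}\|$ (this is exactly Mather's computation for a weighted composition operator on a line bundle); (ii) the subadditive/Krylov--Bogolyubov identification of those rates with $\max_\nu\int\varphi^\sigma d\nu$ and $\min_\nu\int\varphi^\sigma d\nu$; and (iii) Sigmund's density of periodic measures, which applies because every Anosov diffeomorphism of $\mathbb T^2$ is transitive and hence has specification on the whole torus. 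One small remark: the inclusion $P_f\subseteq Q_f\cap\mathbb R_+$ does not really need the variational machinery --- for a periodic point the eigenvalues of $Df^p(x)$ are $Df^p|_{E^s}(x)$ and $Df^p|_{E^u}(x)$, and $|Df^p|_{E^\sigma}(x)|^{1/p}$ is trivially squeezed between $\inf_y\|Df^p|_{E^\sigma}(y)\|^{1/p}$ and $\sup_y\|Df^p|_{E^\sigma}(y)\|^{1/p}$, which converge to $r^\sigma_\mp$. It is only the reverse inclusion that genuinely requires an approximation result such as Sigmund's theorem (or, alternatively, a direct Anosov closing-lemma argument producing periodic orbits whose averages of $\varphi^\sigma$ realize a dense set of values in $[\log r^\sigma_-,\log r^\sigma_+]$). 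Your appeal to the affine weak-$*$-continuous functional $\chi^\sigma$ on the compact convex set of invariant measures correctly yields that the full interval is attained, so both inclusions stand.
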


In contrast to above Theorems~\ref{main} and~\ref{mather} imply.
\begin{cor}
Diffeomorphism $f$ from Theorem~\ref{main} provide an example of a
diffeomorphism with dense set of periodic points such that $P_f\neq
Q_f\cap\mathbb R_+$.
\end{cor}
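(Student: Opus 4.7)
The plan is to demonstrate that $1 \in Q_f \cap \mathbb{R}_+$ while $1 \notin P_f$, which immediately yields the claimed inequality. Density of $\mathrm{Per}(f)$ in $\mathbb{T}^2$ is immediate from Theorem~\ref{main}: the conjugacy sends the dense set $\mathrm{Per}(L)$ bijectively onto $\mathrm{Per}(f)$. Since $f$ is not Anosov (its stable and unstable tangent lines coincide at the heteroclinic tangency $R$), the second assertion of Theorem~\ref{mather} yields $1 \in Q_f$, so $1 \in Q_f \cap \mathbb{R}_+$.

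The heart of the argument is to show $1 \notin P_f$. My approach is to establish that periodic eigenvalues are \emph{uniformly} bounded away from $1$, not merely individually different from $1$. Concretely, I would produce constants $C>0$ and $\nu>1$ such that for every periodic point $x$ of period $p$
$$
\|Df^p|_{E^u(x)}\| \geq C\nu^p, \qquad \|Df^p|_{E^s(x)}\| \leq C^{-1}\nu^{-p}.
$$
Taking $p$-th roots, $\lambda^u(x) \geq C^{1/p}\nu$, which tends to $\nu > 1$ as $p \to \infty$; for periods below any fixed bound there are only finitely many periodic orbits, contributing a finite set of values all strictly greater than $1$ by hyperbolicity. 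Hence the set of unstable periodic eigenvalues is bounded below by some $\nu_0>1$, and the corresponding set of stable eigenvalues is bounded above by some $\nu_0'<1$. Taking closures, $P_f$ misses an open neighborhood of $1$, so $1 \notin P_f$ and therefore $P_f \neq Q_f \cap \mathbb{R}_+$.

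The main obstacle is establishing this uniform hyperbolicity of the periodic set. My plan is to exploit the structure from~\cite{E} and~\cite{BDV}: the $f$-invariant foliations $W^s_f$ and $W^u_f$ persist and are mutually transverse everywhere outside the single non-periodic orbit $\mathcal{O}(R)$, and a continuous $Df$-invariant cone family can be built that is uniformly contracted/expanded outside any prescribed neighborhood of $\mathcal{O}(R)$. Since $\mathcal{O}(R)$ accumulates only on the fixed point $0$, at which $f$ agrees with $L$ and is therefore uniformly hyperbolic, one can control the derivative contributions coming from any visit of a periodic orbit near $\mathcal{O}(R)$ and patch together the required uniform exponential estimate.
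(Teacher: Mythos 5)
Your skeleton --- $1\in Q_f$ by Theorem~\ref{mather} since $f$ is not Anosov, density of $\mathrm{Per}(f)$ from the conjugacy, and $P_f\neq Q_f\cap\mathbb R_+$ once $1\notin P_f$ --- is correct, and the first two points are unproblematic. The gap is in how you propose to prove $1\notin P_f$. You reduce it to a uniform exponential bound $\|Df^p|_{E^u(x)}\|\ge C\nu^p$ over all periodic points, which is the right reduction, but you then dispose of the proof with the sentence that ``one can control the derivative contributions coming from any visit of a periodic orbit near $\mathcal O(R)$ and patch together the required uniform exponential estimate.'' That sentence is precisely the hardest technical content of the paper (the ``cycle'' estimates of Section~4 culminating in~(\ref{period_estimate})): a periodic orbit may pass arbitrarily close to $R$, where $D\theta$ rotates $E^u$ by nearly $\pi/2$ into the contracting cone; after spending time $n$ in the bad set $\bar B_\infty$ the unstable direction needs on the order of $3n$ further iterates to recover, and one must verify that the expansion accumulated during recovery dominates the contraction suffered near the tangency. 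Nothing in your sketch engages with this, and your remark that $\mathcal O(R)$ ``accumulates only on the fixed point $0$, at which $f$ agrees with $L$'' misses where the danger lies: there are two fixed points $P$ and $Q$ in play, near which $f=L$ and everything is harmless; the problem is near $R$ and $f^{-1}(R)$ themselves, where the tangency destroys uniform transversality of the cones.

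There is also a soft argument you are overlooking, which is why the paper presents the corollary as an immediate consequence of Theorems~\ref{main} and~\ref{mather}. Let $x$ be a periodic point of $f$ of period $p$, let $\lambda_s(x)<1<\lambda_u(x)$ be its per-iterate eigenvalue moduli as in the definition of $P_f$, and let $\alpha,\beta$ be the H\"older exponents of $h$ and $h^{-1}$. Points of $W^s(x)$ approach $x$ under $f^{np}$ at exponential rate $\lambda_s(x)^{np}$, while their $h$-images approach $h(x)$ under $L^{np}$ at rate exactly $\lambda^{-np}$; the H\"older inequality for $h$ gives $\lambda^{-1}\le\lambda_s(x)^{\alpha}$ and the one for $h^{-1}$ gives $\lambda_s(x)\le\lambda^{-\beta}$, so $\lambda^{-1/\alpha}\le\lambda_s(x)\le\lambda^{-\beta}$ uniformly in $x$, and symmetrically $\lambda^{\beta}\le\lambda_u(x)\le\lambda^{1/\alpha}$. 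Hence $P_f$ misses the open interval $(\lambda^{-\beta},\lambda^{\beta})\ni 1$, while $1\in Q_f\cap\mathbb R_+$ by Theorem~\ref{mather}. This uses Theorem~\ref{main} as a black box; your route, if actually carried out, would amount to reproving a substantial part of it instead of applying it.
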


\section{First heteroclinic tangency at the boundary of Anosov systems}

Here we describe some results of~\cite{E},~\cite{BDV} and~\cite{C} that we need.

Let $L$ be hyperbolic automorphism of $\mathbb T^2$. Denote by $e_u$
and $e_s$ the eigenvectors of $L$ and by $\lambda>1$ the unstable
eigenvalue, $Le_u=\lambda e_u$. Let $P$ and $Q$ be two different
fixed points of $L$ and $R$ an intersection of the stable manifold
of $P$ and unstable manifold of $Q$. We may assume that distances to
$R$ from $P$ and $Q$ are equal. Also we assume that the size of a
ball containing $\{P,Q,R\}$ is much smaller than the size of
$\mathbb T^2$. Let $(x,y)$ be coordinates in the neighborhood of $R$
that make stable foliation horizontal and unstable foliation
vertical. Let $B$ be a small ball of radius $r$ centered at $R$.

Define $f_t=\theta_t\circ L$, $t\in[0,1]$ where $\theta_t\colon\mathbb T^2\to\mathbb T^2$ is identity outside of $B$ and given by the following formula on $B$
$$
\theta_t(x,y)=
\left(
  \begin{array}{cc}
    \;\;\;\cos(t\gamma(\rho)) & \sin(t\gamma(\rho)) \\
    -\sin(t\gamma(\rho)) & \cos(t\gamma(\rho)) \\
  \end{array}
\right)
\left(
  \begin{array}{c}
    x \\
    y \\
  \end{array}
\right)
$$
where $\rho=\sqrt{x^2+y^2}$ and $\gamma\colon[0,\infty)\to[0,\pi/2]$ is a $C^\infty$ map satisfying $\gamma(0)=\pi/2$; $\gamma(\rho)=0$ for $\rho\ge r$; $\gamma$ is strictly decreasing on $[0,r]$. Thus on every circle centered at $R$ $\theta_t$ is a rotation by an angle no greater than $\pi/2$. Value $\pi/2$ is achieved for $\rho=0$ and $t=1$. Let $v\in T_R\mathbb T^2$ be the unit vertical vector. From the definition of $f=f_1$ we have
$$
\lim_{n\to\pm\infty}(Df^n)v=0.
$$
Hence $f$ is not Anosov.

Denote by $\mathcal U$ and $\mathcal V$ neighborhoods of segments
$PR$ and $Qf^{-1}(R)$ that contain $B$ and $f^{-1}(B)$ respectively.

\begin{theorem}[\cite{E}, \cite{BDV}, \cite{C}]
There exist $r$ small enough and function $\gamma$
such that corresponding arc $f_t$ as defined above satisfies the following.
\begin{enumerate}
\item Diffeomorphisms $f_t$ are Anosov for $t<1$.
\item Diffeomorphism $f$ possesses invariant contracting and expanding foliations $W^s$ and $W^u$. The leaves of $W^s$ and $W^u$ are $C^1$ immersed curves. Denote by $E^s$ and $E^u$ distribution tangent to these foliations.
\item \label{cubic} Foliations $W^s$ and $W^u$ are transverse everywhere but along the orbit of $R$. At the point $R$ they have cubic tangency. Namely, there is $\tau>1$ such that for $S(x,y)\in B$
\begin{equation}
\tan\measuredangle(E^u(S),e_s)\ge\tau(x^2+y^2).
\end{equation}
Analogous inequality holds for distribution $E^s$.
\item \label{Eu_horizontal} For any $S\notin\mathcal U$
$$
\tan\measuredangle(E^u(S),e_u)<\varepsilon.
$$
Number $\varepsilon$ can be made arbitrarily small by the choice of
$L$, $f$ and $\mathcal U$. If we let
\begin{multline*}
\qquad\qquad
B_\infty=\{x\in\mathbb T^2:\;\exists i>0\;\;\;\mbox{such that}\\
 f^{-i}(x)\in B,\;\{f^{-i}(x), f^{-i+1}(x),\ldots x\}\subset\mathcal U\},
\end{multline*}
 then for any $S\notin B_\infty$
$$
\tan\measuredangle(E^u(S),e_u)<1.
$$
Analogous statement holds for $E^s$ and $\mathcal V$.
\item \label{top_conj} Diffeomorphism $f$ is conjugate to $L$ by a homeomorphism $h$, $h\circ f=L\circ h$.
\end{enumerate}
\end{theorem}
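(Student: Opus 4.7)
The plan is threefold: construct the hyperbolic splitting and invariant foliations via cone fields, read off the order of contact at $R$ from the Taylor expansion of $\theta_1$ at the origin, and obtain the topological conjugacy by passing to the limit in the Anosov models $f_t$, $t\nearrow 1$. The main obstacle will be assertion (5): verifying that the limiting conjugacy remains injective despite the loss of transversality of the two foliations along the orbit of $R$.

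\emph{Hyperbolicity and foliations (1)--(2).} Fix narrow cones $C^u_\alpha$, $C^s_\alpha$ about $e_u$ and $e_s$. Outside $L^{-1}(B)$ one has $Df_t=DL$, which strictly preserves and contracts/expands them with rate $\lambda^{\pm 1}$. Inside $L^{-1}(B)$ the derivative is $D\theta_t(Lx)\cdot DL$, and since the rotation angle is bounded by $t\gamma(\rho)\le t\pi/2$, strict cone invariance survives for every $t<1$ once $\alpha$ is chosen small enough; this proves (1). At $t=1$ cone invariance degenerates only at $L^{-1}(R)$, but the standard graph-transform limit still yields continuous $f$-invariant distributions $E^s$, $E^u$ integrating to $C^1$ foliations $W^s$, $W^u$, proving (2). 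The forward $f$-orbit of $R$ is pure $L$-dynamics after one step, so $E^s(R)=e_s$; backward, the computation $Df(L^{-1}R)\cdot e_u=\lambda R(\pi/2)e_u=-\lambda e_s$ gives $E^u(R)=e_s$, so the two foliations are tangent along the orbit of $R$ and, by the cone estimate, transverse off it.

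\emph{Cubic tangency (3).} Because $\rho=\sqrt{x^2+y^2}$ is not $C^\infty$ at the origin, the smoothness of $\theta_1$ forces all odd derivatives of $\gamma$ at $0$ to vanish, so
\[
\gamma(\rho)=\tfrac{\pi}{2}+c_1\rho^2+O(\rho^4),\qquad c_1<0.
\]
A direct calculation of $D\theta_1$ at $(x',y')\in B$ then yields
\[
D\theta_1(x',y')\cdot e_u=\bigl(1+O(\rho'^2),\;-c_1(x'^2+3y'^2)+O(\rho'^3)\bigr).
\]
Using that $E^u(f^{-1}S)$ is essentially $e_u$ (the backward orbit of $f^{-1}S$ avoids $B$ and so is pure $L$-dynamics), normalizing $Df(f^{-1}S)\cdot E^u(f^{-1}S)\approx\lambda\, D\theta_1(\theta_1^{-1}S)\cdot e_u$ gives $\tan\measuredangle(E^u(S),e_s)\ge\tau(x^2+y^2)$ for any $\tau<|c_1|$ and $\rho$ sufficiently small, proving (3); the analogous argument with forward and backward roles reversed handles $E^s$.

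\emph{Angle control and conjugacy (4)--(5).} For (4), outside $\mathcal U$ the backward orbit has not recently visited $B$, so iterated $DL$-cone contraction squeezes $E^u$ into an $\varepsilon$-cone about $e_u$ whose aperture shrinks with $\mathcal U$; outside $B_\infty$ the backward orbit may have traversed $\mathcal U$ but not $B$, so a single-step cone estimate gives the weaker bound~$<1$, and $E^s$ is symmetric. For (5), lift $f$ and $L$ to $\mathbb R^2$ and use the Franks--Manning fixed-point argument to obtain a continuous semi-conjugacy $H$ with $H-\mathrm{id}$ bounded. Injectivity is the main obstacle because the global product structure of $W^s$ and $W^u$ fails along the orbit of $R$; I would resolve it by taking the limit of the Anosov conjugacies $h_t$, $t\nearrow 1$: uniform hyperbolicity of $f_t$ on each interval $[0,1-\delta]$ yields equicontinuity of $\{h_t\}$, so a subsequence converges to a continuous $h$ with $h\circ f=L\circ h$, and injectivity of $h$ follows from the strict transversality of $W^s$, $W^u$ off the (isolated) orbit of $R$ established in (3)--(4).
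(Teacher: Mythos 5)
The central gap is in your treatment of assertions (T1)--(T2). The claim that ``strict cone invariance survives for every $t<1$ once $\alpha$ is chosen small enough'' is false: at points of $L^{-1}(B)$ near $L^{-1}(R)$ the derivative $D\theta_t\cdot DL$ rotates $e_u$ by an angle close to $t\pi/2$, so for $t$ near $1$ the image of any fixed narrow cone about $e_u$ lands near $e_s$, far outside that cone. This is exactly why the isotopy is not $C^1$-small and why the theorem is nontrivial. The real argument of \cite{E}, \cite{BDV} and \cite{C} is global: after the unstable direction is knocked to nearly horizontal inside $B$, one must follow the orbit out of $\mathcal U$ and count the iterates of pure $DL$-dynamics available before the next return to $L^{-1}(B)$, showing they suffice to restore the direction to a vertical cone; this needs non-constant cone fields and the geometric arrangement that $|QR|$ is large compared with $r$ and $\lambda$ (achieved in the paper by passing to a finite cover of the torus). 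The same omission undercuts your (T2) (the graph transform has no uniform contraction at $t=1$) and your (T4), where ``aperture shrinks with $\mathcal U$'' glosses over precisely the fattening-of-$\mathcal U$ argument the paper sketches. Note the paper does not reprove (T1)--(T3) at all --- it cites them and only sketches (T4) and the quadratic analogue of (T3) --- so you are attempting more than is required, but the attempt as written does not close.

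The second genuine gap is the injectivity step in (T5). Passing to a limit of the Anosov conjugacies $h_t$ does not work as described: equicontinuity of $\{h_t\}_{t<1}$ would require hyperbolicity estimates uniform up to $t=1$, which is exactly what fails; and even granting a continuous limit, a uniform limit of injective maps need not be injective, so transversality of $W^s$ and $W^u$ off the orbit of $R$ does not by itself rescue the limit. The paper's route is simpler: apply Walters' topological stability theorem to $g=L$, $\tilde g=f$ (legitimate since $d_{C^0}(L,f)\to0$ as $r\to0$) to get a semiconjugacy $h$ with $d(h,\mathrm{id})<\varepsilon$, then prove injectivity directly --- if $h$ identified two distinct points, the local product structure of $W^s$ and $W^u$ (topological transversality suffices, even at the tangency) would place them on a common leaf, and iterating forward or backward produces identified points a definite distance apart, contradicting $d(h,\mathrm{id})<\varepsilon$. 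Your Franks--Manning step is an acceptable substitute for Walters, but you need this expansivity argument, not a limit. Finally, in (T3) your Taylor computation is in the right spirit, but ``$E^u(f^{-1}S)$ is essentially $e_u$'' must be made quantitative (as in Proposition~\ref{propos}, the deviation is of order $d(S)^2$, the same order as the lower bound you are proving), which is why the paper's sketch propagates the estimate from $W^u(Q)\cap B$ to all of $B$ via the shear form of $D\theta$ on circles rather than absorbing the error wholesale.
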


\begin{remark}
Technical statements~(T\ref{cubic}) and~(T\ref{Eu_horizontal}) are not stated explicitly in the papers quoted but they follow from the cone constructions that are carried out there.
\end{remark}

It may seem that since the size of $B$ is small $E^u$ is almost
vertical inside of $B_\infty$ and almost horizontal outside. In
fact, the transition through the boundary of $B_\infty$ is
continuous. Parameter $\tau$ increases when $r$ goes to zero.

We will be working with exactly the same construction, but $\theta$
must be chosen differently. Function $\gamma$ can be chosen
differently with $\gamma^{(r)}(0)=0$ for $r<2$ and $\gamma''(0)<0$.
Then $\theta\in C^{1+Lip}$ and the tangency is quadratic. This way
instead of~(T\ref{cubic}) we have
\begin{equation}
\label{quadratic}
 \tan\measuredangle(E^u(S),e_s)\ge\tau\sqrt{x^2+y^2},\;\; \tau>1.
\end{equation}

We outline proofs of~(T\ref{Eu_horizontal}) and~(\ref{quadratic}) at
the end of this section.

\begin{remark}
For conservative systems the theorem above was established
in~\cite{C}. Original proof~\cite{E} required that product of the
eigenvalues at $P$ is greater than $1$ while the product of the
eigenvalues at $Q$ is less than $1$. Assumption on the eigenvalues
at $P$ and $Q$ in~\cite{BDV} is even more restrictive. The main
motivation of~\cite{C} was to extend the example to systems with
homoclinic tangency. Our proof works for homoclinic intersection as
well. We work with a heteroclinic intersection only for convenience.
Also we would like to remark that our proof of H\"older continuity
works for the original construction in~\cite{E}. One needs to start
the isotopy with a $C^1$ small perturbation of $L$ that satisfies
above assumptions on eigenvalues at $P$ and $Q$ instead of starting
with $L$.
\end{remark}

Let us recall the proof of~(T\ref{top_conj}) from~\cite{E} since
this is the statement that we strengthen.

\begin{proof}
The main tool here is the following result of P.~Walters.
\begin{theorem}[\cite{W}]
Let $g\colon M\to M$ be an Anosov diffeomorphism. Then there exists an $\varepsilon_0>0$ such that for each $0<\varepsilon<\varepsilon_0$ there exists $\delta>0$ such that if $\tilde g\colon M\to M$ is a homeomorphism and if $d(g,\tilde g)<\delta$, then there exists a unique continuous map $h$ of $M$ onto $M$ with $h\circ \tilde g=g\circ h$ and $d(h,id)<\varepsilon$.
\end{theorem}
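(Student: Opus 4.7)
The plan is to deduce the theorem from two standard hyperbolic features of $g$: \emph{expansiveness} and the \emph{shadowing lemma}. Expansiveness provides an $\alpha>0$ such that $d(g^n x,g^n y)<\alpha$ for all $n\in\mathbb Z$ forces $x=y$. The shadowing lemma provides, for every sufficiently small $\varepsilon>0$, some $\delta>0$ such that every bi-infinite $\delta$-pseudo-orbit of $g$ (a sequence $\{x_n\}$ with $d(g(x_n),x_{n+1})<\delta$) is $\varepsilon$-shadowed by the orbit of some point; combined with expansiveness the shadowing point is unique once $2\varepsilon<\alpha$. Both facts hold for any Anosov diffeomorphism and are classical consequences of its uniform local product structure.

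Given these ingredients, set $\varepsilon_0=\alpha/2$, fix $\varepsilon<\varepsilon_0$, and take $\delta$ as in the shadowing lemma. If $d(g,\tilde g)<\delta$, then for any $x\in M$ the sequence $\{\tilde g^n(x)\}_{n\in\mathbb Z}$ is a $\delta$-pseudo-orbit for $g$, since $d(g(\tilde g^n x),\tilde g^{n+1}x)\le d(g,\tilde g)<\delta$. Define $h(x)$ to be the unique point whose $g$-orbit $\varepsilon$-shadows this sequence; automatically $d(h,\mathrm{id})<\varepsilon$. The intertwining $g\circ h=h\circ\tilde g$ is immediate: both $g(h(x))$ and $h(\tilde g(x))$ have $g$-orbits that $\varepsilon$-shadow $\{\tilde g^{n+1}(x)\}_{n\in\mathbb Z}$, so uniqueness of shadowing identifies them. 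Continuity of $h$ and uniqueness of $h$ within the class $\{h':d(h',\mathrm{id})<\varepsilon\}$ follow from the same principle: any subsequential limit $y$ of $h(x_k)$ as $x_k\to x$ satisfies $d(g^n y,\tilde g^n x)\le\varepsilon$ for every fixed $n$, forcing $y=h(x)$, and any competitor $h'$ automatically shadows the same pseudo-orbit and hence coincides with $h$.

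Surjectivity is a separate, purely topological step. Because $h$ is $C^0$-close to $\mathrm{id}$, the straight-line interpolation inside a tubular neighborhood of the diagonal shows that $h$ is homotopic to $\mathrm{id}$, so $h$ acts as the identity on $H_*(M;\mathbb Z/2)$. If $h$ missed some point $p\in M$, it would factor through $M\setminus\{p\}$, whose top mod-$2$ cohomology vanishes, contradicting $h_*=\mathrm{id}$ in top degree; hence $h(M)=M$.

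The only genuinely nontrivial input is the shadowing lemma itself. Its proof relies on the uniform local product structure of $g$: one solves the shadowing equation by a contraction-mapping argument on finite truncations of the pseudo-orbit, using that $g$ contracts along $E^s$ and $g^{-1}$ contracts along $E^u$, and then passes to the bi-infinite limit. Once shadowing is in hand, every remaining clause in the theorem is a mechanical consequence of its uniqueness.
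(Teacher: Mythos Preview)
Your argument is correct and is, in fact, the standard modern route to Walters' topological stability theorem: expansiveness plus the shadowing lemma yield existence, uniqueness, continuity, and the semiconjugacy relation, and a degree/homology argument gives surjectivity. Each step you outline is sound.

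However, there is nothing to compare against here. The paper does \emph{not} prove this theorem; it merely quotes it as a result of Walters~\cite{W} and immediately applies it with $g=L$ and $\tilde g=f$ to produce the semiconjugacy $h$. The substance of the paper's argument at that point lies in the subsequent step---showing that $h$ is injective using the local product structure of $W^s$ and $W^u$---not in reproving Walters' theorem. So your proposal supplies a proof where the paper simply cites one.
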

Apply the theorem for $g=L$ and $\tilde g=f$ to get semiconjugacy $h$ with
$d_{C^0}(h,id)<\varepsilon$. Note that $d_{C^0}(L,f)\to 0$ as $r\to 0$.
We have to take $r$ small enough so that $\varepsilon$ is smaller than constant associated
to the local product structure of $W^s$ and $W^u$. This guarantees that $h$ is injective.
Indeed if $h$ glues together some points on, say, unstable manifold then iterating forward
we get that $h$ glues together some points fixed distance apart. This is impossible since $h$ is close to identity.

Hence, by the invariance of domain theorem, $h$ is a homeomorphism.
\end{proof}

\begin{proof}[Sketch of proof of~(T\ref{Eu_horizontal})]
Cone constructions in~\cite{E} and~\cite{C} imply that $\forall
S\notin B_\infty$
$$
\tan\measuredangle(E^u(S), e_u)<1.
$$
Then given a small number $\varepsilon$ there exists $N$ such that
$\forall S\notin\cup_{i=0}^Nf^i(B_\infty)$
$$
\tan\measuredangle(E^u(S), e_u)<\varepsilon.
$$
It is possible to fatten $\mathcal U$ so that set $\tilde B_\infty$
that corresponds to new fattened neighborhood $\tilde{\mathcal U}$
of $PR$ contains $\cup_{i=0}^Nf^i(B_\infty)$ as shown on the
Figure~\ref{2_fatten_U}.


\begin{figure}[htbp]
\begin{center}

\begin{picture}(0,0)%
\includegraphics{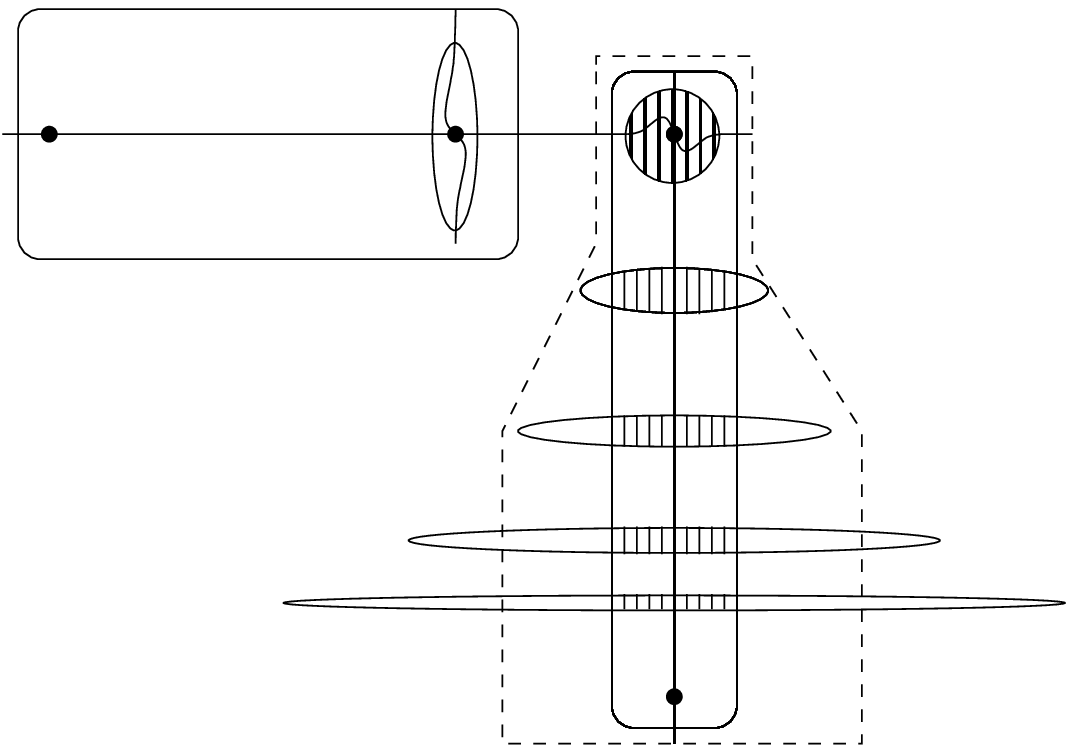}%
\end{picture}%
\setlength{\unitlength}{3947sp}%
\begingroup\makeatletter\ifx\SetFigFont\undefined%
\gdef\SetFigFont#1#2#3#4#5{%
  \reset@font\fontsize{#1}{#2pt}%
  \fontfamily{#3}\fontseries{#4}\fontshape{#5}%
  \selectfont}%
\fi\endgroup%
\begin{picture}(5120,3549)(364,-3298)
\put(751,-1186){\makebox(0,0)[lb]{\smash{{\SetFigFont{12}{14.4}{\rmdefault}{\mddefault}{\updefault}{\color[rgb]{0,0,0}$\mathcal V$}%
}}}}
\put(3676,-3136){\makebox(0,0)[lb]{\smash{{\SetFigFont{12}{14.4}{\rmdefault}{\mddefault}{\updefault}{\color[rgb]{0,0,0}$P$}%
}}}}
\put(4051,-361){\makebox(0,0)[lb]{\smash{{\SetFigFont{12}{14.4}{\rmdefault}{\mddefault}{\updefault}{\color[rgb]{0,0,0}$B$}%
}}}}
\put(4201,-1111){\makebox(0,0)[lb]{\smash{{\SetFigFont{12}{14.4}{\rmdefault}{\mddefault}{\updefault}{\color[rgb]{0,0,0}$f(B)$}%
}}}}
\put(4501,-1711){\makebox(0,0)[lb]{\smash{{\SetFigFont{12}{14.4}{\rmdefault}{\mddefault}{\updefault}{\color[rgb]{0,0,0}$f^2(B)$}%
}}}}
\put(4051,-2911){\makebox(0,0)[lb]{\smash{{\SetFigFont{12}{14.4}{\rmdefault}{\mddefault}{\updefault}{\color[rgb]{0,0,0}$\mathcal U$}%
}}}}
\put(4651,-2911){\makebox(0,0)[lb]{\smash{{\SetFigFont{12}{14.4}{\rmdefault}{\mddefault}{\updefault}{\color[rgb]{0,0,0}$\tilde{\mathcal U}$}%
}}}}
\put(526,-265){\makebox(0,0)[lb]{\smash{{\SetFigFont{12}{14.4}{\rmdefault}{\mddefault}{\updefault}{\color[rgb]{0,0,0}$Q$}%
}}}}
\put(1801,-136){\makebox(0,0)[lb]{\smash{{\SetFigFont{12}{14.4}{\rmdefault}{\mddefault}{\updefault}{\color[rgb]{0,0,0}$f^{-1}(B)$}%
}}}}
\end{picture}%

\end{center}
\caption{The hatched set is $B_{\infty}$. Distance $|QR|$ is much
bigger than $f^N(B)=f^2(B)$. Hence it is possible to fatten
$\mathcal U$ to $\tilde{\mathcal U}$ so that unstable distribution
outside $\tilde B_\infty$ is $\varepsilon$-close to horizontal
vector $e_u$.}\label{2_fatten_U}
\end{figure}


For that we need to make sure that $f^N(B)$ is small compared to the
distance $|QR|$. This can be achieved by appropriate choice of
automorphism $L$, $P$ and $Q$.

We fix a hyperbolic matrix $L$ that induces an automorphism of
$\mathbb T^2=\mathbb R^2/\mathbb Z^2$ with $d$ fixed points. We fix
size $r$ of the ball $B$ and the map $\theta|_B$. The trick now is
to choose the torus $\mathbb T^2$ to be ``big" when compared to
eigenvalue $\lambda$ of $L$ and $r$.

Linear map $L$ induces a hyperbolic automorphism of $\mathbb
T^2=\mathbb R^2/k\mathbb Z^2$, where $k$ is a big integer. This
automorphism is a finite cover of the automorphism of $\mathbb
T^2=\mathbb R^2/\mathbb Z^2$. It also has $d$ fixed points.
Obviously the distances between those fixed points are big now.
Hence $P$ and $Q$ can be chosen so that $|QR|$ is big.
\end{proof}
\begin{remark}
We will use the fact that $|QR|$ can be chosen big independently of
$r$ and $\lambda$ several times in the course of the proof of
H\"older continuity.
\end{remark}

\begin{prop}
\label{propos} Given a point $S\in\mathcal U$. Denote by $d(S)$ the
distance to $PR$. Then
\begin{equation}
\label{cubic_for E^s} \tan\measuredangle(E^s(S),e_s)\le
\kappa\,d(S)^2.
\end{equation}
where $\kappa$ is a number that depends on $\varepsilon$
from~(T\ref{Eu_horizontal}).
\end{prop}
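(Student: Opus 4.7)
The plan is to exploit the fact that $f$ acts linearly on $\mathcal{U}$ and then reduce the proposition to a boundary estimate at the locus where forward orbits exit $\mathcal{U}$. First I would observe that $f=\theta\circ L$ with $\theta$ the identity outside $B$; combined with the freedom (already exploited in the proof of~(T\ref{Eu_horizontal})) to take $|PR|$ much larger than $r$, this forces $L(\mathcal{U})\cap B=\emptyset$, because $L(\mathcal{U})$ sits in $\{s\le |PR|/\lambda\}$ while $B\subset\{|PR|-r\le s\le |PR|+r\}$ in the local coordinates. Consequently $D\theta(L(S))=I$ for every $S\in\mathcal{U}$, so $Df(S)=L$ on $\mathcal{U}$.

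Given $S\in\mathcal{U}$, let $N$ be the largest integer with $L^n(S)\in\mathcal{U}$ for $n=0,\ldots,N$. Such $N$ exists and is finite because $d(L^n(S))=\lambda^n d(S)$ eventually exceeds the $e_u$-width $w$ of $\mathcal{U}$; moreover $d(T)\in(w,\lambda w]$ for $T:=L^{N+1}(S)$. The previous step applied inductively gives $f^{N+1}(S)=T$ and $Df^{N+1}(S)=L^{N+1}$, so by invariance of the stable bundle
$$
E^s(S)=L^{-(N+1)}\bigl(E^s(T)\bigr).
$$
A direct computation in the $(e_s,e_u)$-basis shows that $L^{-1}$ scales $\tan\measuredangle(\cdot,e_s)$ by $\lambda^{-2}$ while scaling distance to $PR$ by $\lambda^{-1}$; in particular $\tan\measuredangle(E^s(S),e_s)/d(S)^2=\tan\measuredangle(E^s(T),e_s)/d(T)^2$.

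It remains to bound $\tan\measuredangle(E^s(T),e_s)$ by $\varepsilon$. I would argue that $T$ lies just outside $\mathcal{U}$ in the unstable direction and, because $\mathcal{U}$ and $\mathcal{V}$ are well separated on a sufficiently large torus, $T$ is disjoint from $\mathcal{V}$; hence $T$ lies outside the analogue of $B_\infty$ for the stable distribution, and the analogue of~(T\ref{Eu_horizontal}) for $E^s$ and $\mathcal{V}$ referenced in the theorem gives $\tan\measuredangle(E^s(T),e_s)<\varepsilon$. Combining with $d(T)>w$ yields $\tan\measuredangle(E^s(S),e_s)<(\varepsilon/w^2)\,d(S)^2$, so one may take $\kappa=\varepsilon/w^2$. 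I expect the main obstacle to be precisely this last geometric verification: confirming that the thin strip $\{S:d(S)\in(w,\lambda w]\}$ around $PR$ is disjoint from $\mathcal{V}$ and from the backward $f$-images that enter the definition of the stable analogue of $B_\infty$. This will rely on careful bookkeeping of the placement of $P$, $Q$, $R$ on a large enough torus.
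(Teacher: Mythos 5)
Your proposal is correct and follows essentially the same route as the paper's proof: iterate forward until the orbit leaves $\mathcal U$ (so that the derivative along the way is exactly $L$), apply the $E^s$-analogue of~(T\ref{Eu_horizontal}) at the exit point, and pull back using the fact that $L^{-1}$ scales $\tan\measuredangle(\cdot,e_s)$ by $\lambda^{-2}$ while scaling $d(\cdot)$ by $\lambda^{-1}$. You are merely more explicit than the paper about why $Df=L$ on $\mathcal U$ and why the exit point avoids $\mathcal V$, both of which the paper asserts without comment.
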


\begin{proof}[Proof of Proposition]
Let $N$ be the smallest positive integer such that
$f^N(S)\notin\mathcal U$. Then $\lambda^nd(S)\approx1$ and
$\tan\measuredangle(E^s(f^N(S)),e_s)\le\varepsilon$
by~(T\ref{Eu_horizontal}) since $f^N(S)\notin\mathcal V$ as well.
$$
\tan\measuredangle(E^s(S),e_s)=\lambda^{-2n}\tan\measuredangle(E^s(f^N(S)),e_s)\le
C\varepsilon d(S)^2.
$$
\end{proof}

\begin{proof}[Sketch of proof of~(\ref{quadratic})]
Denote by $A$ and $\tilde A$ points of intersection of the line $QR$
and boundary $\partial B$ as shown on Figure~\ref{3_q_variation}. It
follows follows from the definition of $f$ that straight segment
$[Q,A]$ is inside of $W^u(Q)$ while straight segment $[P,R]$ is
inside $W^s(P)$. The shape of $W^u(Q)$ between $A$ and $\tilde A$ is
completely determined by $\theta$. Namely, it is the image of the
segment $[A, \tilde A]$ under $\theta$. We remark that on the other
hand the shape of $W^u(Q)$ between $A$ and $R$ determines $\theta$
since map $\theta$ is a rigid rotation on every circle around $R$.


\begin{figure}[htbp]
\begin{center}

\begin{picture}(0,0)%
\includegraphics{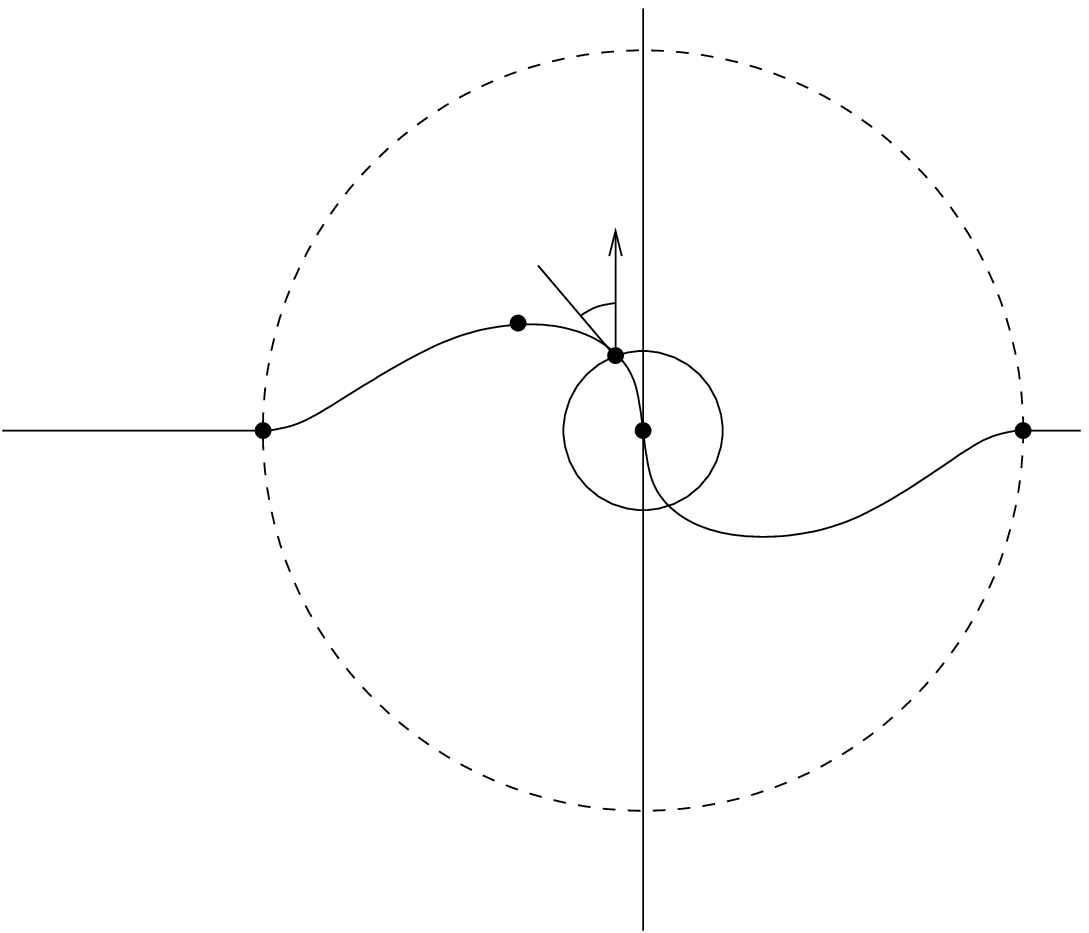}%
\end{picture}%
\setlength{\unitlength}{3947sp}%
\begingroup\makeatletter\ifx\SetFigFont\undefined%
\gdef\SetFigFont#1#2#3#4#5{%
  \reset@font\fontsize{#1}{#2pt}%
  \fontfamily{#3}\fontseries{#4}\fontshape{#5}%
  \selectfont}%
\fi\endgroup%
\begin{picture}(5199,4449)(-1286,-3373)
\put(1525,-805){\makebox(0,0)[lb]{\smash{{\SetFigFont{12}{14.4}{\rmdefault}{\mddefault}{\updefault}{\color[rgb]{0,0,0}$D$}%
}}}}
\put(3301,164){\makebox(0,0)[lb]{\smash{{\SetFigFont{12}{14.4}{\rmdefault}{\mddefault}{\updefault}{\color[rgb]{0,0,0}$B$}%
}}}}
\put(3676,-1186){\makebox(0,0)[lb]{\smash{{\SetFigFont{12}{14.4}{\rmdefault}{\mddefault}{\updefault}{\color[rgb]{0,0,0}$\tilde A$}%
}}}}
\put(1051,-661){\makebox(0,0)[lb]{\smash{{\SetFigFont{12}{14.4}{\rmdefault}{\mddefault}{\updefault}{\color[rgb]{0,0,0}$\theta(C)$}%
}}}}
\put(2176,-811){\makebox(0,0)[lb]{\smash{{\SetFigFont{12}{14.4}{\rmdefault}{\mddefault}{\updefault}{\color[rgb]{0,0,0}$\EuScript C$}%
}}}}
\put(1501, 14){\makebox(0,0)[lb]{\smash{{\SetFigFont{12}{14.4}{\rmdefault}{\mddefault}{\updefault}{\color[rgb]{0,0,0}$e_s$}%
}}}}
\put(1876,-3211){\makebox(0,0)[lb]{\smash{{\SetFigFont{12}{14.4}{\rmdefault}{\mddefault}{\updefault}{\color[rgb]{0,0,0}$W^s(P)$}%
}}}}
\put(-1124,-886){\makebox(0,0)[lb]{\smash{{\SetFigFont{12}{14.4}{\rmdefault}{\mddefault}{\updefault}{\color[rgb]{0,0,0}$W^u(Q)$}%
}}}}
\put(-251,-1186){\makebox(0,0)[lb]{\smash{{\SetFigFont{12}{14.4}{\rmdefault}{\mddefault}{\updefault}{\color[rgb]{0,0,0}$A$}%
}}}}
\put(826,-121){\makebox(0,0)[lb]{\smash{{\SetFigFont{12}{14.4}{\rmdefault}{\mddefault}{\updefault}{\color[rgb]{0,0,0}$E^u(D)$}%
}}}}
\put(1849,-1081){\makebox(0,0)[lb]{\smash{{\SetFigFont{12}{14.4}{\rmdefault}{\mddefault}{\updefault}{\color[rgb]{0,0,0}$R$}%
}}}}
\end{picture}%

\end{center}
\caption{Establishing linear variation of the angle on the circle
$\EuScript C$.}\label{3_q_variation}
\end{figure}


First let us establish linear variation of the
angle~(\ref{quadratic}) along $\theta[A,R]$. It follows from the
choice of $\theta$. Let $C\in[A,R]$ be the point such that
$\measuredangle(E^u(\theta(C)),e_s)=\pi/2$. Then for any
$S\in\theta[C,R]$ we have
$$
\tan\measuredangle(E^u(S),e_s)\ge\tau\sqrt{x^2+y^2}
$$
since the tangency is quadratic. For $S\in\theta[A,C]$
$$
\tan\measuredangle(E^u(S),e_s)\ge c
$$
where $c$ is a big constant, $c\gg 1$.

Fix a circle $\EuScript C$ of radius $\rho$ centered at $R$. Let
$D=\EuScript C\cap\theta[A,R]$. We know that
estimate~(\ref{quadratic}) holds for $D$ and we would like to
establish it for other points on $\EuScript C$.

Consider distribution $\tilde E^u\subset T_{\EuScript C}\mathbb T^2$
given by the formula $\tilde E^u=DL\,E^u$. Then $E^u$ on $\EuScript
C$ is given by $E^u=Df E^u=D\theta\tilde E^u$.

If we denote by $v$ and $u$ normal and tangent vector fields to
$\EuScript C$ then $D\theta$ with respect to bases $(v(\cdot),
u(\cdot))$ and $(v(\theta(\cdot)),u(\theta(\cdot)))$ is given by the
shear matrix
\begin{equation}
\label{d_teta}
 D\theta=\left(
  \begin{array}{cc}
    1 & \alpha \\
    0 & 1 \\
  \end{array}
\right)\!,\; \alpha=\alpha(\rho)>0.
\end{equation}

According to Proposition~\ref{propos}
$$
\tan\measuredangle(\tilde E^u(S),e_u)\le\kappa\rho^2,\;\; S\in
\EuScript C.
$$
Meanwhile
$$
\tan\measuredangle(E^u(D),e_s)\ge\tau\rho.
$$
These inequalities together with the formula for $D\theta$ above
imply linear variation of angle~(\ref{quadratic}) for all
$S\in\EuScript C$.
\end{proof}

\section{Topological conjugacy is H\"older}

Here we prove that the conjugacy $h$ and its inverse are H\"older continuous.

We mimic the standard proof of H\"older continuity of conjugacy
between two Anosov systems (e.~g. Section 19.1 in~\cite{KH}). The
conjugacy maps stable and unstable foliations of $f$, $W^s$ and
$W^u$, into stable and unstable foliations of $L$, $W_L^s$ and
$W_L^u$, respectively.
\begin{step}
A restriction of $h$ to a leaf of $W^u$ is H\"older continuous with the exponent
equal to $1-\delta$. A restriction of $h^{-1}$ to a leaf of $W_L^u$ is H\"older continuous with the exponent equal to $1/4-\delta$. Number $\delta$ depends on the choice of $L$ and $r$ and can be made arbitrarily small.
 \end{step}
\begin{remark}
The distances with respect to which we show H\"older continuity are
induced Riemannian distances
 on the leaves of $W^u$ and $W_L^u$.
\end{remark}

Analogously, $h$ and $h^{-1}$ are H\"older continuous when
restricted to stable leaves with exponents $1-\delta$ and
$1/4-\delta$ respectively. We immediately get the following.
\begin{prop}
Homeomorphism $h^{-1}$ is H\"older continuous with exponent
$1/4-\delta$.
\end{prop}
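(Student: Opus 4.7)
The plan is to upgrade the per-leaf estimates of Step~1 (its unstable statement together with the stable analogue mentioned just above) to a global H\"older bound for $h^{-1}$ on $\mathbb T^2$, by exploiting the local product structure of the linear automorphism $L$. Because $L$ is linear, the foliations $W_L^s$ and $W_L^u$ consist of parallel straight lines in two fixed transverse directions $e_s$ and $e_u$, so the bracket map $(x,y) \mapsto [x,y] := W_L^s(x) \cap W_L^u(y)$ is uniformly Lipschitz on $\mathbb T^2$. More precisely, there are $\epsilon_0>0$ and $K>0$ such that for any $x,y \in \mathbb T^2$ with $d(x,y) < \epsilon_0$ the bracket $z = [x,y]$ is well-defined in a small chart and the induced Riemannian leaf distances satisfy
\begin{equation*}
d^s_L(x,z) \le K\, d(x,y), \qquad d^u_L(y,z) \le K\, d(x,y).
\end{equation*}

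Applying $h^{-1}$, and using that it carries $W_L^{s,u}$-leaves to $W^{s,u}$-leaves, one obtains $h^{-1}(z) \in W^s(h^{-1}(x)) \cap W^u(h^{-1}(y))$. Step~1 in the unstable direction and its stable analogue then give
\begin{equation*}
d^s_f\bigl(h^{-1}(x), h^{-1}(z)\bigr) \le C\, d^s_L(x,z)^{1/4-\delta}, \qquad d^u_f\bigl(h^{-1}(y), h^{-1}(z)\bigr) \le C\, d^u_L(y,z)^{1/4-\delta}.
\end{equation*}
Since the $W^{s,u}$-leaves are $C^1$ immersed curves, ambient distance on $\mathbb T^2$ is bounded above by leaf distance, and the triangle inequality yields
\begin{equation*}
d\bigl(h^{-1}(x), h^{-1}(y)\bigr) \;\le\; d^s_f\bigl(h^{-1}(x), h^{-1}(z)\bigr) + d^u_f\bigl(h^{-1}(z), h^{-1}(y)\bigr) \;\le\; 2CK^{1/4-\delta}\, d(x,y)^{1/4-\delta}.
\end{equation*}

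A standard compactness argument promotes this local estimate (valid for $d(x,y)<\epsilon_0$) to all pairs $x,y \in \mathbb T^2$ at the cost of enlarging the multiplicative constant, giving the global H\"older exponent $1/4-\delta$. The genuine difficulty of the theorem is entirely absorbed into Step~1; the present proposition is a routine corollary. The only thing needing verification is the uniform Lipschitz control of the bracket, and this is immediate because $W_L^s$ and $W_L^u$ are straight, parallel, and transverse with a fixed positive angle everywhere on $\mathbb T^2$.
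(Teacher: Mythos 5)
Your argument is correct and is precisely the ``standard argument that utilizes uniform transversality of $W_L^s$ and $W_L^u$'' that the paper invokes without writing out: Lipschitz control of the bracket for the linear foliations of $L$, the leafwise $1/4-\delta$ estimates of Step~1 (unstable and stable), and the triangle inequality. Nothing is missing.
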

This follows from a standard argument that utilizes uniform
transversality of $W_L^s$ and $W_L^u$.

To conclude that $h$ is H\"older as well one needs to have $W^s$ and $W^u$ to be uniformly transversal. In our situation $W^s$ and $W^u$ are uniformly transversal only outside a neighborhood of the orbit of $R$. This leads to further loss of the exponent by factor of $1/2$.
\begin{step}
Conjugacy $h$ is H\"older continuous with exponent equal to $1/2-\delta$.
\end{step}
Heuristically it is clear that the loss of the exponent at the second step is inevitable. In the second step we ``straighten out" the quadratic tangency. Thus the exponent is no greater than $1/2$.

Together with~(\ref{quadratic}) Proposition~\ref{propos} gives us
control on the angle between $E^s$ and $E^u$ in the neighborhood of
the orbit of $R$ which is crucial for carrying out estimates in Step
2.

Throughout the proof we denote by $d^u$, $d^s$, $d_L^u$ and $d_L^s$
induced Riemannian distances along the leaves of corresponding
foliations.

\begin{proof}[Proof of Step 1]
Uniform continuity of $h$ implies that $\exists C_1>0$ such that
\begin{multline*}
\frac{1}{C_1}\, d_L^u(h(a),h(b))\le d^u(a,b)\le C_1d_L^u(h(a),h(b))\;\;\;\;\;
\mbox{whenever}\;\;\;\;\; d^u(a,b)\ge\frac {r}{10}
\end{multline*}
To prove H\"older estimates for close-by points $a$ and $b\in
W^u(a)$ we need to have exponential estimates on expansion along
$E^u$. Given a point $a\in\mathbb T^2$ let
$$
D^u(a)=\|Df(a)v^u\|,\;\;\;v^u\in E^u, \|v^u\|=1.
$$
If $a\notin L^{-1}(B)$ then, obviously, $\lambda^{-1}\le
D^ua\le\lambda$. If $a\in L^{-1}(B)$ then $D^ua$ can be bigger.
Still there exists $D$ such that $D^ua\le D$ for $a\in L^{-1}(B)$.
In fact, using estimates on $\alpha$ from~(\ref{d_teta}) one can
show that $D=2\lambda$ works but we will not use it. Keeping $r$ and
$\theta$ fixed choose $L$ (e.~g. pass to a finite cover as in proof
of~(T\ref{Eu_horizontal})), $P$ and $Q$ so that $|PR|$ is large and
hence first return time $m$ to $L^{-1}(B)$ is big. Thus $\forall
a\in\mathbb T^2$
\begin{equation}
\label{product}
\prod_{i=1}^{m-1}D^uf^i(a)\le\lambda^{m-1}D\le(\lambda^{1+\delta})^m,
\end{equation}
where $\delta=\delta(m, D)$ is a small number. It follows that
\begin{equation}
\exists C_2 :\;\; \forall n>0\; \|Df^nv^u\|\le C_2 (\lambda^{1+\delta})^n\|v^u\|,\;v^u\in E^u.
\end{equation}

Now we use standard argument to prove H\"older continuity. Take
$a\in W^u(b)$ close to $b$. Let $N$ be the smallest number such that
$d^u(f^N(a),f^N(b))\ge r/10$. Then
\begin{multline*}
d_L^u(h(a),h(b))=\frac{1}{\lambda^N}d_L^u\left(L^N(h(a)),L^N(h(b))\right)\le\frac{C_1}{\lambda^N}d^u(f^N(a),f^N(b))\\
\le\frac{C_1C_3}{\lambda^N}d^u(f^N(a),f^N(b))^{1-\delta}\le\frac{C_1C_2^{1-\delta}C_3}{\lambda^N}(\lambda^{1+\delta})^{N(1-\delta)}d^u(a,b)^{1-\delta}\le Cd^u(a,b)^{1-\delta}.
\end{multline*}

To show that $h^{-1}$ is H\"older along $W_L^u$ we need an estimate
on the product in~(\ref{product}) from below. According
to~(T\ref{Eu_horizontal})  if $a\notin \mathcal U\cup L^{-1}(B)$
then $D^u(a)\ge\mu$, where $\mu=\mu(\varepsilon)$ and
$\mu\nearrow\lambda$ when $\varepsilon\to 0$ (we remark that the
choices we do to make $\varepsilon$ small do not affect $\lambda$).
If $a\in \mathcal U\backslash B_\infty$ then another inequality
from~(T\ref{Eu_horizontal}) provide an estimate on expansion.

If $a\in B_{\infty}$ then, obviously, $D^u(a)\ge\lambda^{-1}$ but we
need to have a better control. Split $B_{\infty}$ into its connected
components
$$B_{\infty}=\bigcup_{i\ge 0}B_i$$ (see Figure~\ref{2_fatten_U}). Let $(x^i,y^i)$ be coordinates in $B_i$ obtained by parallel
transport of $(x,y)$ from $R$ to $f^i(R)$. Consider rectangles
\begin{equation}
\label{bar1}
\bar B_i=\{(x^i,y^i) : |x^i|\le r\lambda^{-i}, |y^i|\le r\lambda^{-3i} \}.
\end{equation}
Also let
\begin{equation}
\label{bar2}
\bar B=\bigcup_{i\ge 0}\bar B_i.
\end{equation}

Let $\xi=\tau r$ where $\tau$ is from~(\ref{quadratic}).
\begin{lemma}
\label{lemma}
Consider a point $S(x^n,y^n)\in B_n$,  $S\notin \bar B_n$. Then
$$
\tan\measuredangle(E^u(S),e_s)\ge\xi.
$$
\end{lemma}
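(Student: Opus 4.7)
The plan is to reduce to the quadratic tangency estimate~(\ref{quadratic}) on $B$ by pulling $S$ back to $B$ under $f^{-n}$ and then pushing the resulting angle estimate forward under $DL^n$. The crucial preliminary observation is that, with the arrangement in which $|PR|$ is much larger than $r$, every forward iterate $L^k(B)$ for $k\ge 1$ lies entirely outside $B$: the point $L(R)$ lies on $W^s(P)$ at distance $(1-\lambda^{-1})|PR|\gg r$ from $R$, and subsequent iterates move still farther. Since $\theta=\mathrm{id}$ outside $B$, this forces $f^k=L^k$ and $Df^k=DL^k$ on $B$ for $1\le k\le n$. Writing $S':=f^{-n}(S)\in B$ with coordinates $(x,y)$ at $R$, the identity $S=L^n(S')$ together with $L=\mathrm{diag}(\lambda^{-1},\lambda)$ in the $(e_s,e_u)$ basis gives the coordinate change $(x,y)=(\lambda^n x^n,\lambda^{-n}y^n)$.

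Next I would extract the geometric content of the assumption $S\in B_n\setminus\bar B_n$. The containment $S'\in B$ forces $x^2+y^2\le r^2$, and hence $|x^n|=\lambda^{-n}|x|\le r\lambda^{-n}$, so the alternative $|x^n|>r\lambda^{-n}$ from definition~(\ref{bar1}) is automatically ruled out; consequently $S\notin\bar B_n$ must come from $|y^n|>r\lambda^{-3n}$, i.e.\ $|y|=\lambda^{-n}|y^n|>r\lambda^{-2n}$. Applying the quadratic tangency~(\ref{quadratic}) at $S'\in B$ then yields
\[
\tan\measuredangle(E^u(S'),e_s)\ge\tau\sqrt{x^2+y^2}\ge\tau|y|>\tau r\lambda^{-2n}.
\]

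To finish, I would transport this estimate from $S'$ to $S$ via $Df^n|_{S'}=DL^n$. Since $L$ acts as $\mathrm{diag}(\lambda^{-1},\lambda)$ in $(x,y)$-coordinates, $DL^n$ multiplies the slope of any non-vertical tangent vector with respect to the horizontal $e_s$ by exactly $\lambda^{2n}$; combined with the $Df$-invariance of the unstable bundle $E^u$ this produces
\[
\tan\measuredangle(E^u(S),e_s)=\lambda^{2n}\tan\measuredangle(E^u(S'),e_s)\ge\lambda^{2n}\cdot\tau r\lambda^{-2n}=\tau r=\xi,
\]
as required. The only delicate point in the argument is the opening reduction $f^k=L^k$ along the orbit segment from $S'$ to $S$; this is exactly where the freedom to make $|PR|$ large—via the finite-cover trick used in the sketch of~(T\ref{Eu_horizontal})—is consumed. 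Once that is granted, everything reduces to the linear action of $DL^n$ on slopes, chained with~(\ref{quadratic}).
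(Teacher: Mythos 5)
Your strategy coincides with the paper's (pull $S$ back to $B$ by $f^{-n}=L^{-n}$, apply the quadratic bound~(\ref{quadratic}) there, push the slope forward gaining the factor $\lambda^{2n}$), and your opening reduction $Df^{n}=DL^{n}$ along the orbit segment is fine, but the coordinate bookkeeping in the middle contains an error that exactly cancels the factor you need. You orient the coordinates so that $x$ is contracted by $L$ (the $e_s$-direction) and $y$ is expanded (the $e_u$-direction), giving $(x,y)=(\lambda^{n}x^{n},\lambda^{-n}y^{n})$. Under that assignment, $|y^{n}|>r\lambda^{-3n}$ yields $|y|=\lambda^{-n}|y^{n}|>r\lambda^{-4n}$, \emph{not} $>r\lambda^{-2n}$ as you wrote (you computed $\lambda^{-n}\cdot\lambda^{-3n}=\lambda^{-2n}$ instead of $\lambda^{-4n}$). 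Carried through correctly, your argument gives only $\tan\measuredangle(E^u(S),e_s)\ge\lambda^{2n}\cdot\tau r\lambda^{-4n}=\tau r\lambda^{-2n}$, which is much weaker than $\xi=\tau r$. The failure is structural, not merely arithmetic: with your orientation $f^{-n}(\bar B_n)$ is the long thin strip $\{|x|\le r,\ |y|\le r\lambda^{-4n}\}$, so a point of $B_n\setminus\bar B_n$ can pull back to within distance $r\lambda^{-4n}$ of $R$, where~(\ref{quadratic}) gives no usable lower bound.

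Definition~(\ref{bar1}) is consistent with the lemma only when the roles are reversed: the side of $\bar B_n$ of size $r\lambda^{-n}$ must lie along the direction that $f^{-n}$ \emph{contracts} (i.e., $e_u$) and the side of size $r\lambda^{-3n}$ along the direction it \emph{expands} (i.e., $e_s$), so that $f^{-n}(\bar B_n)$ is a square of side comparable to $r\lambda^{-2n}$ centered at $R$. This is exactly what the paper does: it writes the coordinates of $f^{-n}(S)$ as $(\lambda^{-n}x^{n},\lambda^{n}y^{n})$, and then \emph{both} exclusion cases are live and are treated separately --- $|x^{n}|\ge r\lambda^{-n}$ gives $|\lambda^{-n}x^{n}|\ge r\lambda^{-2n}$, while $|y^{n}|\ge r\lambda^{-3n}$ gives $|\lambda^{n}y^{n}|\ge r\lambda^{-2n}$ --- each forcing the pullback at least $r\lambda^{-2n}$ away from $R$ and hence $\tan\measuredangle(E^u(S),e_s)\ge\lambda^{2n}\tau r\lambda^{-2n}=\xi$. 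In particular, your claim that the alternative $|x^{n}|>r\lambda^{-n}$ is automatically vacuous is an artifact of the wrong orientation; under the correct one $|x^{n}|$ can be as large as order $r\lambda^{n}$, and that case is in fact the generic one and must be handled.
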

\begin{proof}
Note that this follows from~(\ref{quadratic}) if $n=0$.

$$
\tan\measuredangle(E^u(S),e_s)=\lambda^{2n} \tan\measuredangle(E^u(f^{-n}(S)),e_s).
$$
Clearly $(\lambda^{-n}x^n,\lambda^ny^n)$ are $(x,y)$ coordinates of $f^{-n}(S)$. If $|x^n|\ge r\lambda^{-n}$ then
  $$
  \lambda^{2n} \tan\measuredangle(E^u(f^{-n}(S)),e_s)\ge \lambda^{2n}\tau(\lambda^{-n}x^n)\ge\tau r.
  $$
If $|y^n|\ge r\lambda^{-3n}$ then
   $$
   \lambda^{2n} \tan\measuredangle(E^u(f^{-n}(S)),e_s)\ge
   \lambda^{2n}\tau(\lambda^{n}y^n)\ge\tau r.
   $$                                                           \end{proof}

Take an arbitrary point $a\notin\bar B$. Then
$\tan\measuredangle(E^u(a),e^s)\ge\xi$ and hence there exists $m>0$
such that $D^uf^m(a)>\mu$. Therefore contraction along unstable
mostly happens only inside $\bar B$. After the point leaves $\bar B$
its unstable direction ``recovers" after $m$ steps even before the
point leaves $\mathcal U$.

Now we will be doing estimates from below on expansion along $E^u$
by analyzing itinerary of a point.

As before (sketch of proof of (T\ref{Eu_horizontal})) we make sure
by the choice of $L$ that after a point leaves $\mathcal U$ it
spends at least time $4m+4$ before it enters $\mathcal V$.

Take a point $a(x,y)\in B$ and  assume that $f^i(a)\in \bar B_i$,
$i=0,\ldots n$, $f^{n+1}(a)\notin \bar B_{n+1}$. By the Lemma
$|x|\le r\lambda^{-2n}$, $|y|\le r\lambda^{-2n}$. Hence
$f^{-1}(a),\ldots f^{-2n}(a)\in\mathcal V$, $D^uf^{-i}(a)\ge\mu$,
$i=2,\dots 2n$. Simple calculation with $D\theta$ shows that
$D^uf^{-1}(a)\ge 1$. Start with $f^{-2n}(a)$ and wait time
$4n+4m+4$. We know for sure that during that time the orbit has not
entered $\mathcal V$ again. Thus
\begin{multline}
\label{period_estimate}
\prod_{i=-2n}^{2n+4m+4}D^uf^i(a)=\prod_{i=-2n}^{-1}D^uf^i(a)
\prod_{i=0}^{n+m}D^uf^i(a)
\prod_{i=n+m+1}^{2n+4m+4}D^uf^i(a)\\
\ge\mu^{2n-1}\lambda^{-n-m-1}\mu^{n+3m+4}=\lambda^{-n-m-1}\mu^{3n+3m+3}.
\end{multline}
This estimate is good enough to get exponent $1/2-\delta$. The only
problem is that it holds only along specific orbit segments
described above. Call them ``cycles".

As before take $a\in W^u(b)$ close to $b$ and let $N$ be the smallest number such that $d^u(f^N(a),f^N(b))\ge r/10$.

Let us first explain the idea informally. We have to study how the
length of $d^u(f^i(a),f^i(b))$ changes as $i=0,\ldots N$. We
decompose this time segment into ``cycles" as
in~(\ref{period_estimate}). These ``cycles" do not overlap. There
might be some ``gaps" between the ``cycles" that only improve the
estimate since the time spent in the ``bad" set $\bar B_\infty$ is
inside the ``cycles". The difficulty that we have to deal with is
that at the beginning ``cycle" might be ``incomplete". The ``worst"
situation is when $a$ is close to $R$. Same problem occurs at the
end --- the last ``cycle" might happen to be ``cut" at the end.

Notice that in the description above we ignore returns to $\mathcal
U\backslash B$. The expansion rate inside this set might be less
than $\mu$ but still is greater than 1 according
to~(T\ref{Eu_horizontal}). Therefore these returns are much easier
to take care of. Hence we consider only ``cycles" that correspond to
returns to $B$.

The problem of the ``cut" at the end is easy to deal with. Denote by
$t_j$, $j=1,\ldots l$ the lengths of ``cycles". We count incomplete
``cycles" as well. We also consider numbers $n_j$, $j=1,\ldots l$.
In the notation of~(\ref{period_estimate}) $t_j=4n_j+4m+4$. Assume
that the last ``cycle" number $l$ is incomplete. During the last
``cycle" the segment enters $B$, spends time $n_l$ inside $\bar
B_{\infty}$ and then it recovers to the size of $r/10$ during the
time less than $n_l+4m+4$. When the segment leaves $\bar B_{n_l}$ it
has length of order $\lambda^{-3n_l}$ since that is the vertical
size of $\bar B_{n_l}$ and the unstable foliation is roughly
vertical in $\bar B_{n_l}$. Clearly number $n_l$ is not big since
otherwise the segment cannot recover to the size $r/10$. We can
control $N$ independently of $n_l$ by choosing $a$ and $b$ extremely
close to each other. This may result in big H\"older constant but
the exponent will not be affected. In fact, since $n_l$ is bounded
this difficulty at the end of ``time-window" $i=0,\ldots N$ can be
taken care of by the H\"older constant.

Now assume that the first ``cycle" is incomplete as well. In contrast to above $n_1$ might happen to be big when compared to $N$ since the size of the segment is small at the beginning. Clearly we need to examine the ``worst" situation when $a\in B$. As before we can argue that after time $n_1$ the size of the segment is of the order $\lambda^{-3n_1}$. Number $N-n_1$ is greater than $3n_1$ since during this time the segment grows up to size $r/10$.
Hence, using~(\ref{period_estimate}) we get
\begin{multline}
\label{prelim}
d^u(f^N(a), f^N(b))\approx \lambda^{-n_1}\lambda^{n_1}\lambda^{\frac34(N-2n_1)}\lambda^{-\frac14(N-2n_1)}d^u(a,b)\\
=\lambda^{-\frac12(N-2n_1)}d^u(a,b)=\lambda^{-\frac12(\frac N2+\frac N2-2n_1)}d^u(a,b)
\ge\lambda^{\frac N4}d^u(a,b).
\end{multline}
This clearly good enough to get exponent $\frac14-\delta$.

From now on we will be providing details to the scheme described above. Still we stay away from completely rigorous technical discussion. The technical details are plentiful while the way argument works is fairly transparent.

To get the estimate $N\ge 4n_1$ we need to redefine slightly the set $\bar B$, numbers $n_i$ and $t_i$ accordingly. First fix $\mu$ close to $\lambda$, $\mu<\lambda$.
\begin{equation*}
\bar B_i=\{(x^i,y^i) : |x^i|\le \tilde r\lambda^{-i}, |y^i|\le \tilde r\lambda^{-3i} \},
\end{equation*}
\begin{equation*}
\bar B=\bigcup_{i\ge 0}\bar B_i,
\end{equation*}
where $\tilde r=\tilde r(\mu)<r/20$ is chosen so that $D^u(x)\ge\mu^{-1}$ for any $x\in B\backslash \bar B_0$ and by the arguments of Lemma~\ref{lemma} $D^u(x)\ge\mu^{-1}$ for any $x\notin \bar B_\infty$.

We study lengths of the segment during the time $N$ defined above.
We consider the ``worst" case when $a$ and $b$ are close to $R$. Let
$n_1$ be the smallest integer such that
\begin{equation*}
\frac{d^u(f^{n_1+1}(a),f^{n_1+1}(b))}{d^u(f^{n_1}(a),f^{n_1}(b))}\ge\mu^{-1}.
\end{equation*}
Then it is easy to see that after some fixed number of iterates $m=m(\mu)$ which is independent of $n_1$ we will have
\begin{equation*}
\frac{d^u(f^{n_1+m+1}(a),f^{n_1+m+1}(b))}{d^u(f^{n_1+m}(a),f^{n_1+m}(b))}\ge\mu.
\end{equation*}

\begin{remark}
Notice that any iterate of the segment does not cross more than one
connected component of $\bar B_\infty$. Indeed, the vertical
distance between $\bar B_n$ and $\bar B_{n+1}$ is of order
$\lambda^{-n}$, horizontal sizes of $\bar B_n$ and $\bar B_{n+1}$
are of order $\lambda^{-3n}$ while in the ``gap" between $B_n$ and
$B_{n+1}$ unstable foliation is in horizontal cone field according
to~(T\ref{Eu_horizontal}). It follows that local unstable leaves do
not intersect $\bar B_n$ and $\bar B_{n+1}$.
\end{remark}

Analogously define numbers $n_j$, $j=1,\ldots l$. Then define $t_j=n_j+4m+4$ and corresponding ``cycles" as before. Clearly we have an analogue of~(\ref{period_estimate})
\begin{equation}
\label{period_estimate_new}
d^u(f^{s_j+t_j}(a),f^{s_j+t_j}(b))\ge\lambda^{-n-m-1}\mu^{3n+3m+3}d^u(f^{s_j}(a), f^{s_j}(b)).
\end{equation}
where $s_j$ is the starting time of a full ``cycle". If the
itinerary of the segment has complete first ``cycle" then the same
way as in the proof of H\"older continuity of $h$
using~(\ref{period_estimate_new}) we get
$$
d^u(h^{-1}(a),h^{-1}(b))\le C d^u(a,b)^{\frac12-\delta}
$$
with $\delta=\delta(\mu)\searrow 0$ as $\mu\nearrow\lambda$.

Now we go back to  the ``worst" case when the first ``cycle" starts
near $R$. Definitions of $\bar B_\infty$ and $n_1$ guarantee that at
least half of the segment $[f^{n_1-1}(a),f^{n_1-1}(b)]\subset
W^u(f^{n_1-1}(a))$ lies inside of $\bar B_{n_1-1}$. Recall that
unstable foliation is almost vertical inside $\bar B_\infty$. Hence
$$
d^u(f^{n_1-1}(a), f^{n_1-1}(b))\le \frac r{10}\lambda^{-3(n_1-1)}.
$$
Recall that $D^u(x)\le\lambda$ for any $x\notin L^{-1}(B)$ and $d^u(f^N(a),f^N(b))\ge r/10$. It follows that $N-n_1\ge\Gamma n_1$ where $\Gamma$ depends on frequency of visits to $L^{-1}(B)$ and can be made arbitrarily close to $3$. Now the preliminary estimate~(\ref{prelim}) transforms into the following one

\begin{multline*}
d^u(f^N(a),f^N(b)) \ge\lambda^{-n_1-m} \mu^{n_1+3m+4}\mu^{\frac34(N-2n_1)}
\lambda^{-\frac14(N-2n_1)}\\
\ge\lambda^{-n_1-m} \mu^{n_1+3m+4}(\mu/\lambda)^{\frac14(N-2n_1)}
\mu^{\frac12(N-2n_1)}\\
=\lambda^{-n_1-m}\mu^{n_1+3m+4}(\mu/\lambda)^{\frac14(N-2n_1)} \mu^{\frac12(1-\Delta)(N-2n_1)}\mu^{\frac12\Delta(N-2n_1)}\\
\ge\mu^{\frac12\Delta(N-2n_1)}\ge\mu^{\frac14(N-\delta)},
\end{multline*}
where $\delta=\delta(\Gamma,\Delta)$ is small and $\Delta=\Delta(\lambda,\mu)$ is chosen so that $\mu^{\frac12(1-\Delta)(N-2n_1)}$ compensates the factors in front of it compensates the factor in front of . Number $\Delta\nearrow 1$ as $\mu\nearrow\lambda$. It follows that $\delta$ can be arbitrarily small.
\end{proof}

\begin{proof}[Proof of Step 2]
First of all let us notice that outside of $\mathcal U$ foliations $W^s$ and $W^u$ are uniformly transversal. Then, by the standard argument, H\"older continuity along $W^s$ and $W^u$ implies H\"older continuity of $h$ with exponent $1-\delta$ outside of $\mathcal U$.

It follows from~(\ref{quadratic}) and Proposition~\ref{propos} that
inside $B$ the angle  $\measuredangle(E^s,E^u)$ varies linearly with
distance to $R$. This allows to show that $h$ is H\"older continuous
with exponent $1/2-\delta$ inside $B$. More work is required to
establish H\"older continuity in the rest of $\mathcal U$. We start
with an observation that allows to reduce our task to establishing
H\"older inequality for points inside of a single $B_n$, $n\ge 0$.

Introduce vertical and horizontal cones
$$
\mathcal C_v(x)=\{v\in T_x\mathbb T^2 : \tan\measuredangle(v,e_s)<\varepsilon\},
$$
$$
\mathcal C_h(x)=\{v\in T_x\mathbb T^2 : \tan\measuredangle(v,e_s)>\xi\}
$$
with $\varepsilon$ as in~(T\ref{Eu_horizontal}) and $\xi$ as in Step
1. These cones have disjoint interiors. Moreover, $E^s(x)\in\mathcal
C_v(x)$ for any $x\in\mathcal U$ by~(T\ref{Eu_horizontal}) and
$E^u(x)\in\mathcal C_h(x)$ for any $x\in \mathcal U\backslash\bar
B_\infty$ by Lemma~\ref{lemma}. Thus $\mathcal C_v$ and $\mathcal
C_h$ provide good control of $E^s$ and $E^u$ in $\mathcal
U\backslash\bar B_\infty$.

Take $a$ and $b$ closeby inside $\mathcal U$. Let $e$ be the
intersection of local unstable manifold $W^u(a,r/10)$ and local
stable manifold $W^s(b,r/10)$.

\begin{lemma}
\label{lemma2}
Assume that some fixed proportion with respect to the length of local unstable manifold connecting $a$ and $e$ lies inside of $\mathcal C_h$ --- meaning that tangent vector is in the cone. Then
$$
d(h(a),h(b))\le Cd(a,b)^{1-\delta}.
$$
\end{lemma}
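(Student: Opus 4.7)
The plan is to split $d(h(a),h(b))$ at $h(e)$, apply the leafwise H\"older estimate of Step~1, and reduce to establishing \emph{linear} bounds on the leaf distances. Triangle inequality and Step~1 give
\[
d(h(a),h(b))\le d(h(a),h(e))+d(h(e),h(b))\le C\bigl(d^u(a,e)^{1-\delta}+d^s(e,b)^{1-\delta}\bigr),
\]
so it is enough to show $d^u(a,e),\,d^s(e,b)\le C_1 d(a,b)$, since then the exponent $1-\delta$ rides through unchanged. Uniform continuity of $h$ disposes of the case when $d(a,b)$ is bounded away from zero, so throughout we may take $d(a,b)$ as small as we need.

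Let $\gamma\colon[0,L]\to\mathbb T^2$ parametrize $W^u$ from $a$ to $e$ by arc length, $L=d^u(a,e)$, and let $\mathcal C_h^{\pm}$ denote the two connected components of $\mathcal C_h$, distinguished by the sign of the $y$-component; on $\mathcal C_h$ the definition gives $|\gamma'_y|\ge c_0:=\xi/\sqrt{1+\xi^2}$. The hypothesis $|\{s:\gamma'(s)\in\mathcal C_h\}|\ge\eta L>0$ produces some $s_0$ with $\gamma'(s_0)\in\mathcal C_h^{+}$, say. For $d(a,b)$ small, $L$ is correspondingly small, so by continuity of $E^u$ along $W^u$ the tangent $\gamma'$ stays in a small angular neighborhood of $\gamma'(s_0)$ throughout $[0,L]$, hence in $\mathcal C_h^{+}$; in particular $\gamma'_y\ge c_0$ on $[0,L]$, $\gamma_y$ is monotone, and
\[
|e_y-a_y|=\int_0^L\gamma'_y\,ds\ge c_0 L.
\]
On the other hand $E^s\in\mathcal C_v$ on all of $\mathcal U$ implies the stable arc from $b$ to $e$ is $\varepsilon$-close to horizontal, so $d^s(e,b)\le\sqrt{1+\varepsilon^2}\,|e-b|\le C(d(a,b)+L)$ and $|e_y-b_y|\le\varepsilon\,d^s(e,b)$. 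Combining,
\[
c_0 L\le|a_y-b_y|+|e_y-b_y|\le d(a,b)+C\varepsilon(d(a,b)+L),
\]
and choosing $\varepsilon$ small enough (permitted by the remark after~(T\ref{Eu_horizontal})) to absorb the $C\varepsilon L$ term gives $L\le C_1 d(a,b)$, whence $d^s(e,b)\le C_1'd(a,b)$ as well, and the lemma follows.

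The main technical issue will be the modulus of continuity of $E^u$ along $W^u$ inside $\mathcal U$ near the orbit of $R$: one must guarantee that on an arc of length $L$ comparable to $d(a,b)$ the tangent $\gamma'$ cannot rotate enough to escape the half-cone $\mathcal C_h^{+}$. Outside $\bar B_\infty$ this is clear from the cone control of~(T\ref{Eu_horizontal}) and Lemma~\ref{lemma}; inside $\bar B_\infty$ the foliation is nearly vertical, so $\gamma'$ is again pinned. The $C^1$-smoothness of $W^u$ supplied by~\cite{E} ties the two regimes together. With this modulus in hand the remainder is the standard cone-to-bilipschitz dictionary used in the classical proof of H\"older continuity for Anosov conjugacies.
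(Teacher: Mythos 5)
Your overall skeleton --- split at $h(e)$, apply the leafwise H\"older estimate of Step~1 to each piece, and reduce to the linear comparability $d^u(a,e)+d^s(e,b)\le C\,d(a,b)$ --- is exactly the ``standard argument'' the paper alludes to, and the paper itself offers no more detail than that. The gap is in the one step where you actually have to \emph{adjust} that argument: your claim that, once $\gamma'(s_0)\in\mathcal C_h$ for a single $s_0$, continuity of $E^u$ along $W^u$ pins $\gamma'$ inside $\mathcal C_h^{+}$ on all of $[0,L]$ because $L$ is small. This is false, and if it were true the lemma's ``fixed proportion'' hypothesis would be vacuous and the entire subsequent beak analysis (Cases A and B) would be unnecessary. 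There is no uniform modulus of continuity for $E^u$ along unstable leaves: near $f^n(R)$ the leaf $W^u(Q)$ is the image under $f^n$ of the quadratic tangency, i.e.\ a parabola-like arc whose ``curvature'' is of order $\lambda^{3n}$, and along an arc of length of order $\lambda^{-3n}$ (the small dimension of $\bar B_n$) the tangent direction swings from angle $\arctan\xi$ with $e_s$ (the boundary of $\mathcal C_h$, cf.\ Lemma~\ref{lemma}) all the way down to $0$ at $f^n(R)$ and back. So for every $n$ there are arcs of length $\sim\lambda^{-3n}\to 0$ on which a definite proportion of tangents lie in $\mathcal C_h$ and a definite proportion lie deep inside $\mathcal C_v$'s closure; these are precisely the configurations the lemma is designed for, and your argument does not treat them. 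The leaves being individually $C^1$ does not help, because the constants degenerate along the orbit of the tangency.

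Two further points would still have to be addressed even after repairing that step. First, on the portion of the arc that does lie in $\mathcal C_h$, the transverse component $\gamma'_y$ need not keep a constant sign: across the tangency the leaf behaves like a parabola, so the contributions from the two components of $\mathcal C_h$ can cancel, and one cannot conclude $|e_y-a_y|\ge c_0\eta L$ from the measure of $\{s:\gamma'(s)\in\mathcal C_h\}$ alone. One needs an extra input --- e.g.\ that the arc meets at most one component $\bar B_n$ (the remark in Step~1) and a monotonicity statement on each side of it. Second, because $W^u_{loc}(a)$ is parabola-like and $W^s_{loc}(b)$ is nearly flat, they can intersect twice; the estimate $d^u(a,e)\le C\,d(a,b)$ is simply false for the ``far'' intersection point (take $a$ and $e$ symmetric about $f^n(R)$ on the parabola: then $b$ can be taken at distance $O(d^u(a,e)^5\lambda^{6n})$ from $a$ while the hypothesis on the proportion in $\mathcal C_h$ holds). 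So the argument must explicitly work with the canonical choice of $e$ given by the local product structure, which yours does not. In short, the reduction to $d^u(a,e)\le C\,d(a,b)$ is plausible but requires an argument that genuinely uses the fixed-proportion hypothesis and the geometry of $\bar B_\infty$, not a continuity upgrade.
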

Recall that we know that local stable manifold is in $\mathcal C_v$. Then the proof of the Lemma is a straightforward adjustment of the standard one when the whole local unstable manifold lies inside of $\mathcal C_h$ as well.

We have remarked in the  course of the proof of Step 1 that local
unstable leaves do not meet different connected components of $\bar
B_\infty$. Together with Lemma~\ref{lemma2} this implies that we are
only left to deal with points $a$ and $b$ such that local unstable
manifold connecting $a$ and $e$ lies almost entirely in $\bar B_n$
for some $n\ge 0$.

Denote by $(x,y)$ the coordinate system centered at $f^n(R)$ with $x$-axis being horizontal. Observe further that it is enough to consider points $a$ and $b$ that have the same $y$-coordinate. Let $a=a(x_1,y_1)$, $b=b(x_2,y_1)$ and $e=e(x_3,y_3)$. We are aiming at proving the estimate
\begin{equation}
\label{beak}
C|x_1-x_2|^{\frac12}\ge|y_1-y_3|.
\end{equation}
Together with Step 1 this would imply that $h$ is H\"older with exponent $\frac12-\delta$.

Given a point $S(x,y)\in B_n$ we have
\begin{multline*}
\tan\measuredangle(E^u(S),e_s)=\lambda^{2n}\measuredangle(E^u(f^{-n}(S)),e_s)\ge
\lambda^{2n}\tau\sqrt{(\lambda^{-n}x)^2+(\lambda^ny)^2}\\
\ge\lambda^{2n}\tau\lambda^{-n}\sqrt{x^2+y^2}\ge\sqrt{x^2+y^2}
\end{multline*}
and by Proposition~\ref{propos}
$$
\tan\measuredangle(E^s(S),e_s)\le\kappa d(S)^2\le\frac{x^2}2.
$$
These inequalities provide control on the angle needed to carry out the estimate~(\ref{beak}).

Denote by $\EuScript B$ the ``beak" formed by the unstable manifold
connecting $a$ and $e$ and the stable manifold connecting $b$ and
$e$. We will consider two representative cases illustrated on
Figure~\ref{4_beaks}.


\begin{figure}[htbp]
\begin{center}

\begin{picture}(0,0)%
\includegraphics{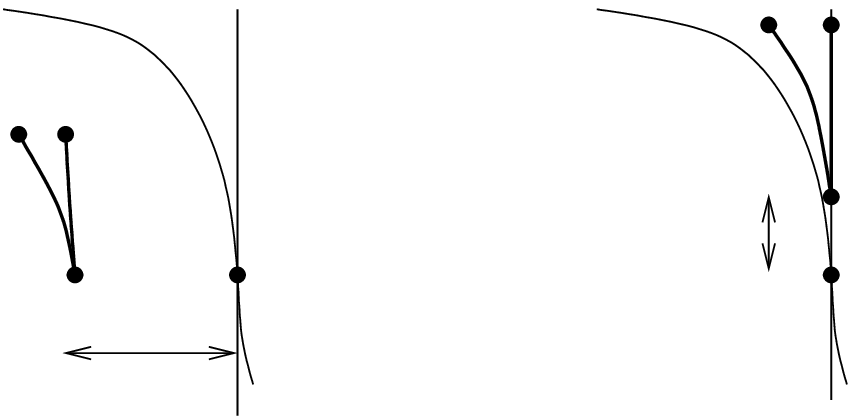}%
\end{picture}%
\setlength{\unitlength}{3947sp}%
\begingroup\makeatletter\ifx\SetFigFont\undefined%
\gdef\SetFigFont#1#2#3#4#5{%
  \reset@font\fontsize{#1}{#2pt}%
  \fontfamily{#3}\fontseries{#4}\fontshape{#5}%
  \selectfont}%
\fi\endgroup%
\begin{picture}(4080,2052)(61,-1198)
\put(577,683){\makebox(0,0)[lb]{\smash{{\SetFigFont{12}{14.4}{\rmdefault}{\mddefault}{\updefault}{\color[rgb]{0,0,0}$W^u(Q)$}%
}}}}
\put(1276,-586){\makebox(0,0)[lb]{\smash{{\SetFigFont{12}{14.4}{\rmdefault}{\mddefault}{\updefault}{\color[rgb]{0,0,0}$f^n(R)$}%
}}}}
\put(4126,-586){\makebox(0,0)[lb]{\smash{{\SetFigFont{12}{14.4}{\rmdefault}{\mddefault}{\updefault}{\color[rgb]{0,0,0}$f^n(R)$}%
}}}}
\put(1276,539){\makebox(0,0)[lb]{\smash{{\SetFigFont{12}{14.4}{\rmdefault}{\mddefault}{\updefault}{\color[rgb]{0,0,0}$W^s(P)$}%
}}}}
\put( 76,239){\makebox(0,0)[lb]{\smash{{\SetFigFont{12}{14.4}{\rmdefault}{\mddefault}{\updefault}{\color[rgb]{0,0,0}$a$}%
}}}}
\put(376,239){\makebox(0,0)[lb]{\smash{{\SetFigFont{12}{14.4}{\rmdefault}{\mddefault}{\updefault}{\color[rgb]{0,0,0}$b$}%
}}}}
\put( 76,-286){\makebox(0,0)[lb]{\smash{{\SetFigFont{12}{14.4}{\rmdefault}{\mddefault}{\updefault}{\color[rgb]{0,0,0}$\EuScript B$}%
}}}}
\put(4126,-211){\makebox(0,0)[lb]{\smash{{\SetFigFont{12}{14.4}{\rmdefault}{\mddefault}{\updefault}{\color[rgb]{0,0,0}$e$}%
}}}}
\put(4126,614){\makebox(0,0)[lb]{\smash{{\SetFigFont{12}{14.4}{\rmdefault}{\mddefault}{\updefault}{\color[rgb]{0,0,0}$b$}%
}}}}
\put(4126,239){\makebox(0,0)[lb]{\smash{{\SetFigFont{12}{14.4}{\rmdefault}{\mddefault}{\updefault}{\color[rgb]{0,0,0}$\EuScript B$}%
}}}}
\put(376,-709){\makebox(0,0)[lb]{\smash{{\SetFigFont{12}{14.4}{\rmdefault}{\mddefault}{\updefault}{\color[rgb]{0,0,0}$e$}%
}}}}
\put(3601,689){\makebox(0,0)[lb]{\smash{{\SetFigFont{12}{14.4}{\rmdefault}{\mddefault}{\updefault}{\color[rgb]{0,0,0}$a$}%
}}}}
\put(676,-1069){\makebox(0,0)[lb]{\smash{{\SetFigFont{12}{14.4}{\rmdefault}{\mddefault}{\updefault}{\color[rgb]{0,0,0}$D$}%
}}}}
\put(3526,-361){\makebox(0,0)[lb]{\smash{{\SetFigFont{12}{14.4}{\rmdefault}{\mddefault}{\updefault}{\color[rgb]{0,0,0}$D$}%
}}}}
\put(2926,479){\makebox(0,0)[lb]{\smash{{\SetFigFont{12}{14.4}{\rmdefault}{\mddefault}{\updefault}{\color[rgb]{0,0,0}$W^u(Q)$}%
}}}}
\end{picture}%

\end{center}
\caption{Beaks.}\label{4_beaks}
\end{figure}

\medskip

{\bfseries Case A. $D\stackrel{\mathrm{def}}{=}dist(\EuScript B, f^n(R))\ge|y_1-y_3|$}.
In this case according to the estimates above we have that $[a,e]$ is tilted at least by $D$ while $[b,e]$ is tilted at most by $D^2$. Hence
$$
|x_1-x_2|\ge|y_1-y_3|(D-D^2)\ge\frac12|y_1-y_3|D\ge\frac12|y_1-y_3|^2.
$$

\medskip

{\bfseries Case B.} We allow $|y_1-y_3|$ to be greater than $dist(\EuScript B, f^n(R))$. Also we make a simplifying assumption that $x_3=0$ and $y_3>0$. Then
\begin{multline*}
|x_1-x_3|\ge\int_{y_3}^{y_1}\sqrt{x^2+y^2}d\,{length}-\int_{y_3}^{y_1}\frac{x^2}2d\,{length}\\
\ge\int_{y_3}^{y_1}(y-y^2/2)d\,{length}\ge\frac13|y_1-y_3|^2.
\end{multline*}

When $x_3\neq 0$ the estimate is similar but first one needs to ``cut" the ``tip of the beak" where $x^2\le y^2$ does not hold. When $y_3<0$ while $y_1>0$ or vice versa modification in the same spirit is required.
\end{proof}

\section{A positive result}

Here we prove Theorem~\ref{fisher}.

Let $M$ be a Riemannian manifold and $d(\cdot,\cdot)$ the distance
function induced by the Riemannian metric. Let $f\colon M\to M$ be
an Anosov diffeomorphism and $g \colon M\to M$ a diffeomorphism
H\"older conjugate to $\tilde f$:
$$
h\circ\tilde f=f\circ h.
$$
Let $\alpha$ be H\"older exponent of $h$ and $\beta$ the H\"older
exponents of $h^{-1}$. Recall that we assume that $\alpha\beta>1/2$.

\begin{definition}
A sequence of points $\{y_i\in M; i\in\mathbb Z\}$ is called
$\varepsilon$-pseudo orbit for $g\colon M\to M$ if
$d(g(y_i),y_{i+1})<\varepsilon$, $i\in\mathbb Z$.
\end{definition}
\begin{definition}
We say that a real orbit $\{f^i(x); i\in\mathbb Z\}$
$\delta$-shadows an $\varepsilon$-pseudo orbit $\{y_i; i\in\mathbb
Z\}$ if $d(f^i(x),y_i)<\delta$.
\end{definition}
\begin{definition}
Diffeomorphism $g\colon M\to M$ is {\it quasi-Anosov} if for all
non-zero $v\in TM$ the sequence $\{\|T^iv\|; i\in\mathbb Z\}$ is
unbounded.
\end{definition}

We will be using the following characterization of Anosov systems.

\begin{theorem}[e.~g.~\cite{M}]
Diffeomorphism $g\colon M\to M$ is Anosov if and only if $g$ is
quasi-Anosov and all dimensions of stable manifolds at periodic
points are the same.
\end{theorem}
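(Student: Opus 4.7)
The forward implication is immediate: the hyperbolic splitting $TM=E^s\oplus E^u$ of an Anosov $g$ forces every non-zero $v$ to have $\|T^nv\|\to\infty$ in forward or backward time, so $g$ is quasi-Anosov, and every periodic point has stable manifold of dimension $\dim E^s$, so those dimensions are constant.

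For the converse I would follow Ma\~n\'e's three-stage strategy. The first stage is to produce a hyperbolic splitting on the non-wandering set $\Omega(g)$. Define
\[
E^+(x)=\{v\in T_xM:\sup_{n\ge 0}\|T^nv\|<\infty\},\qquad E^-(x)=\{v\in T_xM:\sup_{n\le 0}\|T^nv\|<\infty\}.
\]
The quasi-Anosov condition gives $E^+(x)\cap E^-(x)=\{0\}$ for every $x$, since a non-zero vector in the intersection would have bounded bi-infinite orbit. A Liao--Pliss type selection argument applied to the derivative cocycle over the compact invariant set $\Omega(g)$ upgrades this to a continuous dominated splitting $T_{\Omega}M=E^+\oplus E^-$, and quasi-Anosov rules out any failure of uniform contraction/expansion: otherwise an Arzel\`a--Ascoli limit argument extracts a non-zero vector with bounded bi-infinite orbit, contradicting the hypothesis. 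Thus $\Omega(g)$ is hyperbolic. The second stage uses the constant-stable-dimension hypothesis. Once $\Omega(g)$ is hyperbolic, the spectral decomposition theorem applies and periodic points are dense in each basic piece. Since $\dim E^+$ at a periodic point equals the stable manifold dimension there, the hypothesis forces $\dim E^+(x)$ to equal the common value $k$ everywhere on $\Omega(g)$.

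The principal obstacle is the third stage: extending the splitting from $\Omega(g)$ to all of $M$. I would fix continuous cone fields of indices $k$ and $\dim M-k$ around $E^\pm$ on an open neighborhood $U$ of $\Omega(g)$, chosen so that $Dg$ maps the $E^-$-cone into itself and $Dg^{-1}$ maps the $E^+$-cone into itself whenever both the point and its image lie in $U$. For a wandering point $x$, the orbit $\{g^n(x)\}$ accumulates on $\Omega(g)$ in both time directions, so the cones of the correct dimensions propagate forwards from sojourns near $\Omega(g)$ in the past and backwards from sojourns near $\Omega(g)$ in the future, producing well-defined complementary subspaces at $x$. The constant-dimension hypothesis is essential here: if two basic pieces carried splittings of different rank, the cone indices coming from past and future accumulation could be incompatible on wandering orbits that connect them, obstructing the construction. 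Finally, one must promote this continuous $Dg$-invariant splitting on $M$ from dominated to uniformly hyperbolic. This is where quasi-Anosov re-enters: any orbit along which contraction on the $k$-dimensional subbundle failed to be uniform would, by compactness and a subsequential-limit argument, yield a unit vector whose bi-infinite orbit is bounded, contradicting the hypothesis. Hence the splitting is genuinely Anosov, completing the proof.
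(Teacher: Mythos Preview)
The paper does not prove this statement at all: it is quoted from Ma\~n\'e's paper~\cite{M} and used as a black box in the proof of Theorem~\ref{fisher}. There is therefore no ``paper's own proof'' to compare your attempt against.

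Your outline is a reasonable sketch of Ma\~n\'e's original argument, and the overall architecture (quasi-Anosov $\Rightarrow$ hyperbolicity of $\Omega(g)$, then constant index $\Rightarrow$ global extension via cone fields) is the right one. A few of your transitions are more delicate than the sketch suggests. In the first stage, passing from $E^+(x)\cap E^-(x)=\{0\}$ to a \emph{continuous} splitting $T_\Omega M=E^+\oplus E^-$ is not automatic: one needs that $\dim E^+(x)+\dim E^-(x)=\dim M$ at every $x\in\Omega(g)$, and this is where Ma\~n\'e uses a genuine compactness/limit argument together with invariance of $\Omega(g)$, not just a Pliss-type selection. In the third stage, the cone-propagation argument is correct in spirit, but the passage from ``dominated on all of $M$'' to ``uniformly hyperbolic'' is not quite a repeat of the $\Omega$-argument: on the wandering set you no longer have recurrence, so the Arzel\`a--Ascoli limit you invoke must land in $\Omega(g)$, and you then use hyperbolicity there rather than quasi-Anosov directly. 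These are refinements rather than errors; since the paper treats the theorem as a citation, your level of detail already exceeds what the paper provides.
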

Dimensions of stable manifolds at periodic points of $\tilde f$ are
the same since $\tilde f$ is topologically conjugate to $f$. Hence
we only need to show that $\tilde f$ is quasi-Anosov.

Assume that $\tilde f$ is not quasi-Anosov. Then $\exists v\in
T_{\tilde x}M$, $\|v\|=1$, such that $\|T^nv\|\le 1$ for all
$n\in\mathbb Z$. Define $v_n=T^nv$, $n\in\mathbb Z$. For any
sufficiently small $\varepsilon>0$ consider sequence $\{\tilde
x_n=exp(\varepsilon v_n); n\in\mathbb Z\}$. Sequence $\{\varepsilon
v_n\}$ being $\varepsilon$-small and diffeomorphism being
$C^{1+Lip}$ imply that there exists a constant $\tilde c$ that
depends on $\tilde f$ only such that $\{\tilde x_n\}$ is $\tilde
c\varepsilon^2$-pseudo orbit which is obviously $\delta$-shadowed by
the orbit of $\tilde x$ with $\varepsilon/2<\delta<2\varepsilon$.

Let $x=h(\tilde x)$ and $x_n=h(\tilde x_n)$, $n\in\mathbb Z$.
Applying H\"older inequalities we get that $\{x_n\}$ is
$c_1\varepsilon^{2\alpha}$-pseudo orbit for $f$ that is
$\delta$-shadowed by $\{f^n(x)\}$ with
$\delta>c_2\varepsilon^{1/\beta}$. Constants $c_1$ and $c_2$ do not
depend on $\varepsilon$. To make it more transparent denote
$\xi=c_1\varepsilon^{2\alpha}$.

For arbitrarily small $\xi>0$ we have constructed a $\xi$-pseudo
orbit and a true orbit that $\delta$-shadows the pseudo orbit with
$\delta>c\xi^\kappa\stackrel{\mathrm{def}}{=}\xi^{1/2\alpha\beta}$
where $\kappa<1$ by the assumption. Meanwhile it is a well known
simple fact that $\delta$ can be estimated from above $\delta<C\xi$
where $C$ depends only on $f$. The proof is straightforward and
exploits local product structure of stable and unstable foliation of
$f$. For $\xi$ small enough these bounds on $\delta$ contradict each
other. Hence we have arrived at a contradiction.


\begin{thebibliography}{texttLL}
\bibitem[BDV98]{BDV} Ch. Bonatti, L. Diaz, F. Vuillemin. {\it Topologically transverse nonhyperbolic diffeomorphisms
at the boundary of the stable ones.} Bol. Soc. Bras. de Mat. {\bf 29} (1998) 99-144.
\bibitem[C98]{C} M. Carvalho. {\it First homoclinic tangencies in the boundary of Anosov diffeomorphisms.} DCDS-A {\bf 4}
(1998), 765-782.
\bibitem[E98]{E} H. Enrich. {\it A heteroclinic bifurction of Anosov diffeomorphisms.} Ergodic Theory Dynam. Systems, {\bf 18} (1998), 567-608.
\bibitem[F06]{F} T. Fisher. Ph.D. Thesis. Available online at \\
http://www.etda.libraries.psu.edu/theses/approved/WorldWideFiles/ETD-1174/fisher-thesis-draft.pdf.
 PennState, (2006).
\bibitem[K79]{K} A. Katok, {\it Bernoulli diffeomorphisms on surfaces.}  Ann. of Math. (2)  {\bf 110 } (1979), no. 3,
529--547.
\bibitem[KH95]{KH} A. Katok, B. Hasselblatt. {\it Introduction to the modern theory of dynamical systems.} Cambridge University Press, (1995).
\bibitem[KN08]{KN} A. Katok, V. Nitica. {\it Differentiable rigidity of higher rank abelian group actions.} In preparation, (2008).
\bibitem[L80]{L} J. Lewowicz. {\it Lyapunov functions and topological stability.} J. Diff. Eq. {\bf 38} (1980), 192-209.
\bibitem[M77]{M}
R. Ma\~n\'e. {\it Quasi-Anosov diffeomorphisms and hyperbolic
manifolds.} Trans. Amer. Math. Soc.  229  (1977), 351--370.
\bibitem[Math68]{Math}
J. Mather. {\it Characterization of Anosov diffeomorphisms.}  Nederl. Akad. Wetensch. Proc. Ser. A 71 = Indag. Math.  30  (1968) 479--483.
\bibitem[W70]{W} P. Walters. {\it Anosov diffeomorphisms are topologically stable.} Topology  {\bf 9} (1970), 71-78.
\end{thebibliography}
\end{document}